\documentclass[12pt]{article}
\usepackage[T1]{fontenc}
\usepackage{amssymb, amsmath, amsthm}
\usepackage[all]{xy}
\usepackage{tikz-cd}
\usepackage{newtxtext,stmaryrd}
\usepackage{hyperref}
\usepackage{accents}
\usepackage{hhline}
\usepackage{multicol}

\usetikzlibrary{backgrounds,matrix}

\DeclareFontEncoding{OT2}{}{}

\newcommand{\mr}[1]{\mathrm{#1}}
\newcommand{\mf}[1]{\mathfrak{#1}}
\newcommand{\mc}[1]{\mathcal{#1}}
\newcommand{\mb}[1]{\mathbb{#1}}

\newcommand{\ms}[1]{\mathsf{#1}}

\newcommand{\Z}{\mb{Z}}
\newcommand{\Q}{\mb{Q}}
\newcommand{\zp}{\mb{Z}_p}
\newcommand{\qp}{\mb{Q}_p}

\newcommand{\La}{\Lambda}
\newcommand{\F}{\mb{F}}
\newcommand{\R}{\mb{R}}
\newcommand{\C}{\mb{C}}

\newcommand{\qbar}{\overline{\Q}}

\newcommand{\smatrix}[1]{\left(\begin{smallmatrix}#1\end{smallmatrix}\right)}

\newcommand{\ps}[1]{\llbracket #1 \rrbracket}

\DeclareMathOperator{\Hom}{Hom} \DeclareMathOperator{\Aut}{Aut}
\DeclareMathOperator{\End}{End} \DeclareMathOperator{\Gal}{Gal}
 \DeclareMathOperator{\res}{res}

\DeclareMathOperator{\Spec}{Spec}

\DeclareMathOperator{\St}{St}

\DeclareMathOperator{\Res}{Res}
\DeclareMathOperator{\dlog}{dlog}
\DeclareMathOperator{\cond}{cond}

\newcommand{\et}{\mr{\acute{e}t}}
\newcommand{\id}{\mr{id}}
\newcommand{\SL}{\mathrm{SL}}
\newcommand{\GL}{\mathrm{GL}}

\newcommand{\ord}{\mathrm{ord}}
\newcommand{\gm}{\mb{G}_m}

\newcommand{\quo}{\mr{quo}}
\newcommand{\sub}{\mr{sub}}
\newcommand{\Fr}{\mr{Fr}}

\newtheorem{theorem}{Theorem}

\newtheorem{step}{Step}

\newtheorem{conjecture}[theorem]{Conjecture}

\theoremstyle{definition}

\theoremstyle{remark}
\newtheorem{remark}[theorem]{Remark}
\newtheorem*{ack}{Acknowledgments}

\begin{document}

\title{Through the lens of the Eisenstein ideal}
\author{Romyar Sharifi}
\date{}
\maketitle

\begin{abstract}
	This paper exposits relationships between the geometry of modular curves and the arithmetic 
	of cyclotomic fields through the Eisenstein ideal. For an integer $N \ge 5$ and an odd prime $p$, 
	we define two conjecturally inverse maps between the real part $P$ of the Eisenstein reduction
	of $H_1(X_1(N),\zp)$ and the real part $Y$ of $K_2(\Z[\mu_N]) \otimes \zp$.
	We recall a motivic construction of the first map $\varpi \colon P \to Y$ and describe the 
    	recent proof of the Eisenstein property
	of the underlying map on modular symbols. We provide a fully equivariant construction of the second map
	$\Upsilon \colon Y \to P$ using the Eisenstein reduction of the first \'etale cohomology of $X_1(N)$. 
	We then explain an equivariant construction of the Euler system of cyclotomic units using extension classes in
	relative cohomology groups of modular curves and describe its place in our program.
\end{abstract}

The seminal paper \cite{ribet} of Ribet on the converse to Herbrand's theorem demonstrated that congruences between Eisenstein series and cusp forms link the geometry of modular curves with the arithmetic of cyclotomic fields. This link was exploited in the work \cite{mw} of Mazur and Wiles to prove the main conjecture of Iwasawa theory. Their work supplied one divisibility, and the analytic class number formula then implied the other. In more general settings, a different method of proof of the opposite divisibility is needed, and that of Euler systems has served this purpose.

The Ribet-Mazur-Wiles method, as in Ohta's work \cite{ohta-ord2},
involves a reduction of the two-dimensional Galois representation given by an Eisenstein component of an inverse limit of first \'etale cohomology groups of modular curves in a $p$-adic tower. A lattice in such a residually reducible representation is noncanonical, so one chooses an extremal Galois-stable lattice, fixing a $p$-adic local quotient and making the kernel as small as possible. The residual Galois action supplies a surjective homomorphism from an eigenspace of the unramified Iwasawa module $X_{\infty}$ to a Hecke module 
with a $p$-adic $L$-function dividing its characteristic ideal. A comparison of Galois and Hecke actions gives the divisibility.

There is another lattice which stares one in the face, formed from the cohomology of the closed modular curves. In certain eigenspaces, we used this lattice to produce a canonical map $\Upsilon$ from a variant $Y_{\infty}$ of $X_{\infty}$ to the Eisenstein reduction of the real part $\mc{S}_{\infty}$ of the inverse limit of the first homology groups of the closed modular curves \cite{sh-Lfn}. We conjectured that $\Upsilon$ is an isomorphism, which implies that the cohomological lattice is the extremal one.

Busuioc \cite{busuioc} and the author \cite{sh-Lfn} then defined an explicit map from $\mc{S}_{\infty}$ to $Y_{\infty}$. At finite level, it sends Manin symbols in homology to cup products of cyclotomic units in second Galois cohomology. That this map factors through the Eisenstein quotient was proven by Fukaya and Kato \cite{fk-sh} and recently in a stronger conjectured form by Xu \cite{xu}, building on our work with Venkatesh \cite{sv}. I also conjectured in \cite{sh-Lfn} that the resulting map $\varpi$ is inverse to $\Upsilon$ in the allowed eigenspaces, which amounts to an explicit refinement of the main conjecture. In \cite{fk-sh}, Fukaya and Kato proved two strong results towards this conjecture.

The question remained whether Euler systems were amenable to a similar geometric interpretation. Urban \cite{urban} used critical-slope Eisenstein series to produce congruences. The author and Wake gave an approach to the construction of the cyclotomic Euler system in line with the author's program in unpublished work \cite{shwa}. Work outlined in \cite{skinner} and detailed in a recent preprint of Shang, Skinner, and Trejos \cite{sst} gives a closely related approach.

In this article, we exposit the above constructions and their interrelationships, with complete details found in or deferred to other works. 
The highlights are an equivariant construction of $\Upsilon$ that allows us to extend our conjecture, arising from ongoing work with Oh \cite{os}, and an equivariant construction of the cyclotomic Euler system in the cohomology of modular curves, emanating from \cite{shwa}. Though we lack space to discuss generalizations, we present our constructions with these in mind. 

\section*{From geometry to arithmetic}

\subsection*{Some notation}

We fix throughout a positive integer $N \ge 5$ and define numerous objects that depend on $N$ without using
$N$ in the notation. We also fix an odd prime $p$ to work with $\zp$-coefficients. Write $N = Mp^r$ with $p \nmid M$. When we consider inverse limits over increasing $r$, we adorn the limit objects with a subscript $\infty$ and the objects at level $Mp^r$ with a subscript $r$. 

Let $\mf{S}$ and $\mf{M}$ denote the cusp forms and modular forms of weight $2$ and level $\Gamma_1(N)$ with $\zp$-coefficients, respectively.  We have full and cuspidal modular Hecke algebras $\mf{H} \subset \End_{\zp}(\mf{M})$ and $\mf{h} \subset \End_{\zp}(\mf{S})$. Inside $\mf{H}$, we have the Eisenstein ideal $I$ generated by $T_{\ell} - 1 - \ell \langle \ell \rangle$ for primes $\ell \nmid N$ and $U_{\ell} - 1$ for primes $\ell \mid N$. We likewise denote its image in $\mf{h}$ by $I$. Unless otherwise noted, we view $Y_1(N)$ and $X_1(N)$ as schemes over $\Z[\frac{1}{N}]$ with $0 \in X_1(N)(\Q)$; when taking homology, we mean their complex points.

We view $\qbar$ as the algebraic numbers in $\C$ and set $\zeta_N = e^{2\pi i/N}$.
Let $D = (\Z/N\Z)^{\times}/\langle -1 \rangle$ and $\Lambda = \zp[D]$. For $d \in \Z$ prime to $N$, let $[d] \in \La$ denote the group element of $d$. 
 Set $G = \Gal(\Q(\mu_N)^+/\Q)$, and let $\sigma_d \in \Gal(\Q(\mu_N)/\Q)$ satisfy $\sigma_d(\zeta_N) = \zeta_N^d$.  
We shall consider two types of $\Lambda$-modules:
$\mf{H}$-modules on which $[d]$ acts as $\langle d \rangle$, and
$\zp[G]$-modules on which $[d]$ acts as $\sigma_d^{-1}$.\footnote{Papers on the subject have inconsistent conventions regarding $\La$-actions, and we have chosen a new one here.}

Let $\varsigma$ denote complex conjugation.
For a $\zp[\langle \varsigma \rangle]$-module $A$, we let $A^{\pm}$ denote its $(\pm 1)$-eigenspaces for $\varsigma$ and $A_{\pm}$ its maximal quotients on which $\varsigma$ acts as $\pm 1$. We identify these via the quotient maps $A^{\pm} \to A_{\pm}$.

\subsection*{The map $\varpi$}

As a consequence of the Eichler-Shimura isomorphism, the eigenspaces $H_1(X_1(N),\qp)^{\pm}$ of $\pm 1$ for the action of $\varsigma$ are each free of rank $1$ over $\mf{h} \otimes_{\zp} \qp$. 
Let $C'_0$ denote the nonzero cusps on $X_1(N)$, which is to say those that do not lie over $0 \in X_0(N)$. We consider the topological
$\mf{H}$-modules
$$
	\mc{S} = H_1(X_1(N),\zp)_+ \hookrightarrow \mc{S}^{\circ} = H_1(X_1(N),C'_0,\zp)_+.
$$
The group $\mc{S}^{\circ}$ has a simple presentation on Manin symbols $[u:v]$ with $u, v \in \Z/N\Z - \{0\}$ and $(u,v) = 1$. To be precise, here $[u:v]$ is the image of the level $N$ Atkin-Lehner operator $\tau_N \colon z \mapsto \frac{-1}{Nz}$ applied to the class $\{\frac{b}{d} \to \frac{a}{c}\}$ of the geodesic from $\frac{b}{d}$ to $\frac{a}{c}$ in the extended complex upper half-plane for $\smatrix{a&b\\c&d} \in \SL_2(\Z)$ with $(u,v) = (c,d) \bmod N\Z^2$.

On the arithmetic side, we have the $\zp[G]$-modules
$$
	Y = (K_2(\Z[\mu_N]) \otimes \zp)_+ \hookrightarrow Y^{\circ} = (K_2(\Z[\mu_N,\tfrac{1}{N}]) \otimes \zp)_+.
$$
As in \cite{busuioc, sh-Lfn}, there is a homomorphism
$\Pi^{\circ} \colon \mc{S}^{\circ} \to Y^{\circ}$ of $\La$-modules
given by 
$$
	\Pi^{\circ}([u:v]) = \{1-\zeta_N^u,1-\zeta_N^v\},
$$ 
where $\{ \ ,\, \}$ denotes the image of the Steinberg symbol in $Y^{\circ}$.

The following conjecture of the author is now proven due to work of Fukaya-Kato \cite{fk-sh}, the author and Venkatesh \cite{sv}, 
and Xu \cite{xu}.

\begin{theorem} \label{Eis_prop}
	The map $\Pi^{\circ}$ is Eisenstein: i.e., $I\mc{S}^{\circ} \subset \ker \Pi^{\circ}$.
\end{theorem}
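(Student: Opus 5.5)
The plan follows the strategy of Sharifi--Venkatesh and Xu: realize $\Pi^{\circ}$ as the specialization of a Hecke-equivariant construction in the motivic cohomology of $\gm^2$. Then the Eisenstein property becomes a statement about how Hecke operators act on a distinguished class.

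\textbf{Step 1: a cocycle on $\GL_2(\Z)$.} Consider the group $\Gamma = \GL_2(\Z)$ (or a congruence subgroup) acting on the torus $\gm^2$. Following \cite{sv}, I would construct a $1$-cocycle
$$\Theta \colon \Gamma \to K_2\bigl(\Q(\gm^2 - \{1\})\bigr)/\langle\text{torsion}\rangle,$$
with values in the $K_2$ of the function field of $\gm^2$ minus suitable torsion translates of the kernel divisors. Its value on $\smatrix{a&b\\c&d}$ should be, up to correction terms, the Steinberg symbol $\{1-x^ay^b,\,1-x^cy^d\}$.

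The cocycle arises as the connecting map attached to the $\Gamma$-equivariant class in $H^2_{\mathcal{M}}(\gm^2,\Z(2))$ built from the polylogarithm, or equivalently from the cup product of Kato--Siegel-type units on the torus. Pairing this cocycle with relative homology of $X_1(N)$ and then pulling back along the $N$-torsion point $(\zeta_N^{?},\zeta_N^{?})$ should recover $\Pi^{\circ}([u:v])=\{1-\zeta_N^u,1-\zeta_N^v\}$. The Atkin--Lehner twist $\tau_N$ in the definition of $[u:v]$ is what makes the row $(c,d)$ appear.

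\textbf{Step 2: Hecke operators act by degree.} The key structural input is that the motivic class is an eigenvector for the Hecke correspondences induced by isogenies of $\gm^2$. For a prime $\ell$, the correspondence $T_\ell$ on $\Gamma$-cohomology is matched on the torus side by the sum over index-$\ell$ sublattices, that is, by pushforwards along multiplication-by-$\ell$-type isogenies. Norm compatibility of the units $1-x$ (distribution relations) shows that $T_\ell$ acts on the class by $1+\ell\langle \ell\rangle$, and $U_\ell$ acts by $1$ for $\ell\mid N$.

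Transporting this through the specialization map of Step 1 yields $\Pi^{\circ}\circ T_\ell=(1+\ell\langle\ell\rangle)\circ\Pi^{\circ}$. Because $\Pi^{\circ}$ is a $\La$-module map, $\langle \ell\rangle$ acts compatibly on the target through $\sigma_\ell^{-1}$. This gives $I\mc{S}^{\circ}\subset\ker\Pi^{\circ}$.

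\textbf{Main obstacle.} The hard step is making Step 1 precise integrally and $p$-adically. One must show that the boundary terms, namely the symbols involving $1-\zeta_N^0$ at the zero cusps and the torsion ambiguities in $K_2$, vanish after tensoring with $\zp$ and passing to plus parts. One must also show that the specialization is compatible with the Hecke correspondences at primes dividing $N$, including $p$.

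My first attempt would follow Fukaya--Kato \cite{fk-sh} and work in Galois cohomology $H^2(\Z[\mu_N,\tfrac1N],\zp(2))^+$ rather than in $K_2$, using the Chern class isomorphism. There the analogous statement can be checked via Beilinson--Kato elements and the zeta-morphism. Xu's refinement \cite{xu} then upgrades this to integral $K_2$ by controlling the kernel of the Chern class, which is finite and prime to $p$ on plus parts for odd $p$.
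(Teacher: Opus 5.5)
\textbf{There is a genuine gap.} Your Steps 1--2 amount to the cocycle argument of \cite{sv}. As the paper explains, that argument proves only part of the theorem, for two concrete reasons.

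First, restricting the $\GL_2(\Z)$-cocycle $\Theta$ to $\Gamma_1(N)$ and specializing at $(1,\zeta_N)$ gives a \emph{homomorphism} on $\Gamma_1(N)$, i.e.\ a function on $H_1(Y_1(N))$. Parabolicity pushes this down to $\mc{S} = H_1(X_1(N),\zp)_+$. A Manin symbol $[u:v]$ is a path between cusps, not a loop. Evaluating on it requires a modular symbol on the Steinberg module whose values on divisors supported on the relevant cusps are regular at $(1,\zeta_N)$, and restricting $\Theta$ does not supply this. So your construction reaches at best $\Pi^{\circ}|_{\mc{S}}$, whereas the theorem concerns all of $\mc{S}^{\circ} = H_1(X_1(N),C'_0,\zp)_+$.

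Second, the Hecke equivariance of $\Theta$ is for level-one operators $T_\ell$. After restriction to $\tilde{\Gamma}_0(N)$ and specialization, it transports only to the level-$N$ operators $T_\ell$ with $\ell \nmid N$. The operators $U_\ell$ for $\ell \mid N$ are built from matrices $\smatrix{1&j\\0&\ell} \notin \GL_2(\Z)$, so they do not act on a $\GL_2(\Z)$-equivariant class at all. Hence ``$U_\ell$ acts by $1$'' has no content in your setup. Invoking distribution relations for $1-x$ does not help until you have an object on which $U_\ell$ acts compatibly with specialization at $(1,\zeta_N)$. You list this as the main obstacle, but it is the heart of the theorem and remains unresolved.

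The paper's proof, following \cite{xu}, supplies both missing pieces at once. It replaces the cocycle by a $\GL_2(\Q)$-equivariant modular symbol $\Phi \colon \St \to \ms{K}_2/W$ with $\Phi([\tfrac{a}{c}]-[\tfrac{b}{d}]) = \smatrix{a&b\\c&d}\{1-z_1,1-z_2\}$. Here the pushforward-by-adjugate action of integral matrices is extended to $\GL_2(\Q)$ using the trace-fixedness of $\{1-z_1,1-z_2\}$, and $W \supset Z$ is an explicit ambiguity subgroup. Restricting to $\St_N$ (divisors supported away from $\tilde{\Gamma}_0(N)\cdot\infty$) and pulling back by $(1,\zeta_N)$ gives a map on $((\St_N)_{\Gamma_1(N)} \otimes \zp)_+ \cong \mc{S}^{\circ}$. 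On this group $W$ disappears, and the map induces $\Pi^{\circ}$ on the full relative group. The prime-to-level Eisenstein property comes from viewing $\Phi$ as a compactly supported cocycle obtained from a connecting map applied to $e$. The $U_\ell$-property is then a direct computation using the equivariance of $\Phi$ under non-integral matrices that survives restriction and pullback.

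Your fallback does not close the gap either. The Fukaya--Kato zeta map, as described in the paper, is only available for $p \ge 5$, $p \mid N$ and $p \nmid \varphi(M)$, so it cannot give the statement for all $N \ge 5$ and all odd $p$. Also, \cite{xu} does not work by controlling a Chern-class kernel. The comparison of $K_2 \otimes \zp$ with Galois cohomology is Tate's theorem and is not where the difficulty lies; Xu's input is precisely the Steinberg-module symbol just described.
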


Let $\Delta$ be the semigroup of matrices in $\GL_2(\Q)$ with integer entries, and consider the scheme $\gm^2$ over $\Q$ with its right action of $\Delta$. Let $\tilde{\Gamma}_0(N)$ be the subgroup of $\Gamma = \GL_2(\Z)$ of matrices with lower left entry in $N\Z$.
In \cite{sv}, we construct a $\Gamma$-cocycle $\Theta$ from the motivic cohomology of subschemes of $\gm^2$ and use it
to obtain a prime-to-level Eisenstein property of $\Pi^{\circ}|_{\mc{S}}$.

\begin{proof}[Sketch of construction.]
	Consider the Gersten complex $\ms{K} = [\ms{K}_2 \to \ms{K}_1 \to \ms{K}_0]$  
	with terms
	$$
		K_2(\Q(\gm^2)) \xrightarrow{\partial} \bigoplus_S \Q(S)^{\times} \xrightarrow{\partial} \bigoplus_D \Z,
	$$
	where $S$ and $D$ run over irreducible dimension one and zero cycles on $\gm^2$, and
	the residue maps $\partial$ are sums of tame symbols and divisor maps. The semigroup $\Delta$
	acts on $\ms{K}$ by pullback.
	As $\ms{K}$ is right exact and $H_2(\ms{K})$ is the motivic cohomology group $H^2(\gm^2,\Z(2))$, we set
	$\bar{\ms{K}}_2 = \ms{K}_2/H^2(\gm^2,\Z(2))$ to obtain an exact sequence.

	Fix $e = 1$ in the $\{1\}$-summand of $\ms{K}_0$ and $\eta = 1-z_1^{-1}$ in the $\{z_2 = 1\}$-summand
	of $\ms{K}_1$, for $z_i$ the $i$th coordinate function.
	Since $\partial(\eta) = e$, the connecting map $\ms{K}_0^{\Gamma} \to H^1(\Gamma,\bar{\ms{K}}_2)$
	takes $e$ to the class $[\Theta]$ of a cocycle $\Theta \colon \Gamma \to \bar{\ms{K}}_2$ 
	with $\partial \circ \Theta = d\eta$.
	This cocycle has three properties inherited from $e$ and $\eta$:
	\begin{itemize}
		\item it is \emph{Eisenstein} as $T_{\ell}[\Theta] = (\ell + \smatrix{\ell&0\\0&\ell}^*) [\Theta]$ for all primes $\ell$, 
		\item it is \emph{parabolic} as it is trivial on matrices with upper row $(1,0)$, and
		\item its values satisfy an \emph{explicit formula} as sums of $\SL_2(\Z)$-pullbacks of $\{1-z_1,1-z_2\}$.
	\end{itemize}
	
	By analyzing fixed parts under trace maps $\smatrix{n&0\\0&n}_*$ of positive integers $n$, 
	we can replace $\Theta$ by a lift $\Theta \colon \Gamma \to \ms{K}_2/Z$, where 
	$Z = \langle \{-z_1,-z_2\} \rangle$. 
	We have $\ms{K}_2 = \varinjlim_U H^2(U,\Z(2))$ for opens $U$ in $\gm^2$.
	The restriction $\Theta|_{\tilde{\Gamma}_0(N)}$ 
	takes values in $\ms{K}_{2,N}/Z$ for the analogously defined limit $\ms{K}_{2,N}$ over opens containing $(1,\zeta_N)$.
	
	The pullback of $\{-z_1,-z_2\}$ under $(1,\zeta_N)$ is $\{-1,-\zeta_N\}$, which has order dividing $2$.
	Hence, the $p$-part of the pullback $(1,\zeta_N)^* \circ \Theta|_{\tilde{\Gamma}_0(N)}$ is a cocycle 
	$\Theta_N \colon \tilde{\Gamma}_0(N) \to K_2(\Q(\mu_N)) \otimes_{\Z} \zp$,
	where $\smatrix{a&b\\c&d} \in \tilde{\Gamma}_0(N)$ acts as $\sigma_d$.
	This cocycle is parabolic, Eisenstein for 
	$T_{\ell}$ with $\ell \nmid N$, takes values in $K_2(\Z[\mu_N,\frac{1}{N}]) \otimes \zp$, and 
	has an explicit formula as sums of Steinberg symbols $\{1-\zeta_N^u,1-\zeta_N^v\}$. 
	The action of $\Gamma_1(N)$ in $\SL_2(\Z)$ on the coefficients is trivial, so the restriction of $\Theta_N$
	to this group is a homomorphism. Parabolicity and the switch from $\GL_2(\Z)$ to $\SL_2(\Z)$ give that
	the resulting homomorphism factors through $\mc{S}$. Precomposing with $\tau_N$
	yields $\Pi^{\circ}|_{\mc{S}}$ with its explicit formula, and the Eisenstein property of $\Theta_N$ implies 
	that $\Pi^{\circ}|_{\mc{S}}$ factors through the quotient by the action 
	of the operators $T_{\ell} - 1 -\ell \langle \ell \rangle 
	= \langle \ell \rangle(T_{\ell}^* - \ell - \langle \ell \rangle^*)$ for $\ell \nmid N$.
\end{proof}

After \cite{sv}, Lecouturier and Wang \cite{lewa} used level compatibility to show that $\Pi^{\circ}|_{\mc{S}}$ is Eisenstein for all operators
so long as $p \ge 5$. In \cite{xu}, Xu replaces the cocycle with a map from a Steinberg module to prove the full result. These Steinberg modules consist of degree zero divisors on sets of cusps so have the advantage that their $\Gamma_1(N)$-coinvariants are homology groups relative to these cusps.

\begin{proof}[Rough sketch of proof of Theorem \ref{Eis_prop}]
	The Steinberg module $\mr{St}$ for $\GL_2(\Q)$ is the $\Z[\GL_2(\Q)]$-module of degree zero divisors on $\mb{P}^1(\Q)$. 
	A computation of tame symbols shows that there is a $\GL_2(\Q)$-equivariant modular symbol
	$$
		\Phi \colon \St \to \ms{K}_2/W, 
		\quad \Phi([\tfrac{a}{c}]-[\tfrac{b}{d}]) = \smatrix{a&b\\c&d} \{1-z_1,1-z_2\}
	$$ 
	for an explicit subgroup $W$ containing $Z$. Here, we have extended the pushforward-by-adjugate action of $\Delta$
	to $\GL_2(\Q)$ on the image of $\Phi$ using the trace-fixedness of $\{1-z_1,1-z_2\}$.
	Viewing $\Phi$ as a compactly supported cocycle that is the value of a connecting 
	map on $e$ yields its (pullback) Eisenstein property.\footnote{Xu proves this only after pullback by a different method.}
	
	The map $\Phi$ restricts to a prime-to-level Eisenstein modular symbol from the subgroup
	$\mr{St}_N$ of $\mr{St}$ supported away from $\tilde{\Gamma}_0(N) \cdot \infty$ to a quotient of $\ms{K}_{2,N}$.
	This map $\Phi|_{\mr{St}_N} \otimes \zp$ can be pulled back by $(1,\zeta_N)$ to obtain a 
	modular symbol that factors through 
	$$
		((\St_N)_{\Gamma_1(N)} \otimes_{\Z} \zp)_+ \cong \mc{S}^{\circ},
	$$ 
	using $\tau_N$ in the isomorphism.
	There, the ambiguity from $W$ disappears, and the map induces $\Pi^{\circ}$.
	The map $\Pi^{\circ}$ is Eisenstein for $U_{\ell}$-operators by a computation using the remaining equivariance
	of the modular symbol. 
\end{proof}

A straightforward extension of \cite[Theorem 5.4.5]{fk-sh} to include $p = 3$ (cf.~\cite{xu}) tells us that $\Pi^{\circ}(\mc{S}) \subset Y$. 
Set $P = \mc{S}/I\mc{S}$. Let $\varpi \colon P \to Y$ denote the map induced by $\Pi^{\circ}|_{\mc{S}}$.

\begin{conjecture}[S.] \label{varpi_isom}
	The map $\varpi$ is an isomorphism.
\end{conjecture}

The conjectural surjectivity of $\varpi$ for $N = p$ dates back to work with McCallum in \cite{mcs}.
In writing this article, we used Sage via Codex to check, for $N \le 250$ and all odd $p$, equality of the orders of the maximal quotients on which diamond operators and inverse Galois elements act via a given character of prime-to-$p$ order on $P$ and $Y$ respectively \cite{sh-comp}. This built on Magma routines written to test the conjecture over a decade ago.

The earlier approach of Fukaya and Kato to the Eisenstein property is important for its connection with the geometry of modular curves. The key tool, for $p \mid N$ and $p \nmid \varphi(M)$ with $p \ge 5$, is a zeta map
$$
	z^{\circ} \colon H_1(X_1(N),C'_0,\zp) \to H^2_{\et}(Y_1(N),\qp(2))^{\ord}, \quad z^{\circ}([u:v]) = e^{\mr{ord}}(g_{0,u/N} \cup 
	g_{0,v/N}),
$$
where $g_{0,\frac{u}{N}}$ for $N \nmid u$ is a Siegel unit on $Y_1(N)$ and ``ord'' (resp., $e^{\mr{ord}}$) 
denotes the $U_p$-ordinary part (resp., ordinary idempotent).

In \cite{sv}, we constructed the restriction of $z^{\circ}$ to $H_1(X_1(N),\zp)$ and proved its prime-to-level Hecke-equivariance for $p \ge 7$. There, its integral $c$-stabilization for $c \ge 2$ is obtained from a cocycle ${}_c \Theta \colon \Gamma \to K_2(\Q(E^2)) \otimes_{\Z} \zp$   attached to a $\Gamma$-fixed formal sum of $c$-torsion subschemes, where $E$ is the universal elliptic curve over $Y_1(N)$. This mirrors Kato's construction \cite{kato} of theta-functions ${}_c \theta$ on $E$ as trace-fixed elements with $c$-torsion divisors.

Using an integral stabilization of $z^{\circ}$ and a specialization map $\infty^*$ to $Y^{\circ}$,
Fukaya and Kato recover $\Pi^{\circ}$. Specialization is Hecke-equivariant and Eisenstein away from the level, while
the $U_{\ell}$-Eisenstein property is seen on the cup products of Siegel units. Setting $R = \Z[\mu_N,\frac{1}{N}]$, 
this connects to the method of \cite{sv} via the diagram
\begin{equation*}
	\begin{tikzcd}
		\mb{G}_{m/R}^2 \arrow[r] & \tilde{E}_{/R}^2 \\
		\Spec R \arrow{r}{\infty} \arrow{u}{(1,\zeta_N)} & X_1(N)_{/R} \arrow{u}{(0,\iota_N)},
	\end{tikzcd}
\end{equation*}
where $\iota_N$ is the canonical $N$-torsion section on the universal generalized elliptic curve $\tilde{E}$. 
Here, we view $\gm$ as the smooth locus of the nodal cubic fiber of $\tilde{E}$ over $\infty$.
That is, we can specialize our modular symbol for $E^2$ to that of $\mb{G}_m^2$ by pulling back over a formal neighborhood.  

\section*{From arithmetic to geometry}

\subsection*{The Iwasawa main conjecture}

For an even character $\theta$ of $(\Z/Mp\Z)^{\times}$ and a $\Lambda$-module $A$, viewed as a $\zp[(\Z/Mp\Z)^{\times}]$-module
via the canonical homomorphism $(\Z/Mp\Z)^{\times} \to (\Z/N\Z)^{\times}$, we let 
$$
	A_{\theta} = A \otimes_{\zp[(\Z/Mp\Z)^{\times}]} \mc{O}_{\theta},
$$ 
where $\mc{O}_{\theta}$ is the $\zp$-algebra of $\theta$-values, for the map $\Z[(\Z/Mp\Z)^{\times}] \to \mc{O}_{\theta}$ 
induced by $\theta$. We denote by $\cond(\theta)$ the conductor of $\theta$.

For an abelian group $B$, let $B_{\Q} = B \otimes_{\Z} \Q$.
For any prime $\ell \mid N$ and $q = \ell^{v_{\ell}(N)}$, we define $[\ell]$ as the product of the
idempotent in $\La_{\Q}$ attached to the trivial character of the image of $(\Z/q\Z)^{\times}$ and the group element of $\ell$ in the image of $(\Z/\frac{N}{q}\Z)^{\times}$. We then define $[d]$ for any $d \in \Z - \{0\}$ multiplicatively.
For $B_2(x) = x^2-x+\frac{1}{6}$, set
$$
	\xi = -\frac{N}{2} \sum_{d=1}^N B_2(\tfrac{d}{N})[d] \in \La_{\Q}.
$$
Setting $\xi^{(p)} = (1-p[p])\xi$, the $\xi^{(p)}_r \in (\Lambda_r)_{\Q}$ for $r \ge 1$ are compatible under the canonical quotient maps of group rings and yield an element $\xi^{(p)}_{\infty}$ in the total quotient ring $\mc{Q}$ of $\Lambda_{\infty}$.

Let $X_{\infty}$ denote the Galois group of the maximal abelian unramified pro-$p$ extension of $F_{\infty} = \Q(\mu_{Mp^{\infty}})$.  The Iwasawa main conjecture concerns the $\zp\ps{\Gal(F_{\infty}/\Q)}$-module structure of $X_{\infty}^-$ and so the $\La_{\infty}$-module structure of $X_{\infty}(1)^+$. Let $f_{\theta}$ be a numerator in lowest terms of the projection of $\xi^{(p)}_{\infty}$ to the quotient field $\mc{Q}_{\theta}$ of $\Lambda_{\infty,\theta}$. Mazur and Wiles \cite{mw} proved:

\begin{theorem}[Iwasawa main conjecture] \label{mainconj} 
	Suppose that $\theta$ is an even character of $(\Z/Mp\Z)^{\times}$ of conductor $\cond(\theta)$ divisible by $M$. Then 
	the $\La_{\infty,\theta}$-characteristic ideal of $X_{\infty}(1)_{\theta}$ equals $(f_{\theta})$.
\end{theorem}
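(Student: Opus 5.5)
We follow the Mazur--Wiles strategy, in the form given by Ohta and by Wiles for $\Lambda$-adic forms. The proof has three parts: Ribet's method gives one divisibility, Stickelberger-type input identifies the relevant constant term, and the analytic class number formula supplies the reverse divisibility.

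\emph{Step 1 (Hida theory and the Eisenstein ideal).} Let $\mf{h}_{\infty}$ be Hida's ordinary cuspidal Hecke algebra of tame level $M$, and let $I_{\infty}$ be its Eisenstein ideal; we take the $\theta$-part. The $\Lambda$-adic Eisenstein series has constant term given, up to a unit, by $\xi^{(p)}_{\infty}$ projected to $\theta$. We will show that
$$
\La_{\infty,\theta}/(f_{\theta}) \twoheadrightarrow \mf{h}_{\infty,\theta}/I_{\infty,\theta}.
$$
The hypothesis $M \mid \cond(\theta)$ lets us show that this map is an isomorphism. One constructs a $\Lambda$-adic cusp form congruent to the Eisenstein series modulo $f_{\theta}$, which gives injectivity. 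Alternatively, one uses Ohta's computation of the Eisenstein component of the ordinary cohomology of the closed modular curves; this is where the conductor condition removes the contributions from degenerate cusps.

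\emph{Step 2 (Ribet's lattice).} Consider the Galois representation $\rho$ on the Eisenstein component of $\varprojlim_r H^1_{\et}(X_1(Mp^r)_{\qbar},\zp)^{\ord}_{\theta}$, localized at the Eisenstein maximal ideal. This module is free of rank $2$ over $\mf{h}_{\infty,\theta}$ when $\mf{h}_{\infty,\theta}$ is Gorenstein. In general we use Ohta's ordinary/anti-ordinary filtration at $p$, which provides a Galois-stable flag. Write $\rho = \smatrix{a&b\\c&d}$ in a basis adapted to this flag. Reducing modulo $I_{\infty}$, the diagonal characters become $1$ and $\kappa\theta^{-1}$ up to twist, so $b$ induces a cocycle with values in $\mf{h}/I$. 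Local conditions at $p$, coming from the ordinary filtration, together with the fact that ramification at $\ell \mid M$ is trivial on the $\theta$-part when $M \mid \cond(\theta)$, show that $b$ defines a surjection
$$
X_{\infty}(1)_{\theta} \twoheadrightarrow B/IB,
$$
where $B$ is generated by the $b(\sigma)$. A comparison of the $B$ and $C$ ideals with $I$, namely $BC \subset I$ with equality after localization, then gives the following. For every height-one prime $\mf{P}$, the length of $X_{\infty}(1)_{\theta,\mf{P}}$ is at least $\ord_{\mf{P}}(f_{\theta})$. Hence $(f_{\theta})$ divides $\mathrm{char}(X_{\infty}(1)_{\theta})$. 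This step is the main obstacle. The difficulty is controlling the Fitting ideal of $B/IB$ without freeness, and it requires Mazur--Wiles' use of Drinfeld--Manin and the structure of the cuspidal group. I would follow Ohta's $\Lambda$-adic approach, applying Wiles' lemma on lattices over local rings, after reducing to a discrete valuation ring by localizing at each height-one prime.

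\emph{Step 3 (equality).} By Iwasawa's theorem on cyclotomic units and the analytic class number formula, $\prod_{\theta}\mathrm{char}(X_{\infty}(1)_{\theta})$ and $\prod_{\theta} (f_{\theta})$ have equal $\mu$- and $\lambda$-invariants overall. This uses the $p$-adic class number formula at each finite layer together with Kubota--Leopoldt interpolation by $\xi^{(p)}$. Combined with the divisibilities from Step 2, this forces equality for each $\theta$. Characters with $M \nmid \cond(\theta)$ must be handled alongside, either through the same divisibility with Euler factors at $\ell \mid M$ removed, or by restricting the count to the relevant characters, so that the global comparison still applies.
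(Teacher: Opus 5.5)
Your proposal follows essentially the same route as the paper. Both use Ohta's ordinary $\La$-adic form of the Ribet--Mazur--Wiles argument: a basis in which inertia at $p$ is lower-triangular, the Eisenstein reduction of $bd^{-1}$ giving a surjection $X_{\infty}(1)_{\theta} \twoheadrightarrow B/IB$, and $f_{\theta}$ dividing the Fitting ideal of $B/IB$, with the analytic class number formula supplying the reverse divisibility. The step you call the main obstacle needs no Drinfeld--Manin or cuspidal-group input in the paper, since faithfulness of $B$ over $\mf{h}_{\infty,\mf{m},\theta}$ together with $(\mf{h}/I)_{\infty,\theta} \cong \La_{\infty,\theta}/(f_{\theta})$ already forces $f_{\theta}$ to divide that Fitting ideal; the paper also invokes the vanishing of $\mu$, and handles characters of smaller conductor by replacing $M$ with the prime-to-$p$ part of $\cond(\theta)$ rather than by removing Euler factors at level $M$.
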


\begin{remark}
The inverse limit $Y_{\infty}$ of the groups $Y$ over increasing $r$ is related to $X_{\infty}(1)^+$ via a canonical map
$X_{\infty}(1)^+ \to Y_{\infty}$. In fact, for $\theta$ with $M \mid \cond(\theta)$, the $\Lambda_{\infty,\theta}$-modules 
$Y_{\infty,\theta}$ and $X_{\infty}(1)_{\theta}$ have equal characteristic ideals.
\end{remark}

The work of Mazur and Wiles \cite{mw} also implies that 
\begin{equation*} \label{Hecke_mod_Eis_eig}
	(\mf{h}/I)_{\infty,\theta} \cong \Lambda_{\infty,\theta}/(f_{\theta})
\end{equation*}
under its hypothesis on $\theta$.
This is not accidental, for $\xi^{(p)}_{\theta}$ is closely related to the constant term of the projection of the equivariant Eisenstein series
$$
	\mc{E} = \frac{1}{2}\xi^{\sharp} + \sum_{n=1}^{\infty} \sum_{\substack{d \mid n\\(d,N)=1}} d[d] q^n \in \Lambda_{\Q}\ps{q},
$$
where
$$
	\xi^{\sharp} = -\frac{N}{2} \sum_{\substack{d=1\\(d,N)=1}}^N B_2(\tfrac{d}{N})[d] = \prod_{\ell \mid N} (1-\ell[\ell]) \xi.
$$
In the limit, the $\xi^{\sharp}_r$ give an equivariant $p$-adic $L$-function at $s = -1$ with its Euler factors at $\ell \mid M$ removed.
For $p \ge 5$, Lafferty \cite{lafferty} computed the analogous characteristic ideal for arbitrary even characters, avoiding the main
conjecture.

For an $\mf{H}$-module $A$, let $A_{\mf{m}}$ denote its localization at the complement of the union of the maximal ideals containing
$I$. We let Hecke algebras act on \'etale cohomology groups of modular curves via the contravariant action of Hecke operators.
The semilocalization $\mc{T}_{\mf{m}}$ of
$$
	\mc{T} = H^1_{\et}(X_1(N)_{/\qbar},\zp(1))
$$ 
is an $\mf{h}_{\mf{m}}[G_{\Q}]$-module of generic rank two over $\mf{h}_{\mf{m}}$. 
The geometric method of Ribet-Mazur-Wiles exploits the reducibility of the Eisenstein quotient $\mc{T}/I\mc{T}$. We outline the proof in the form given in Ohta's work \cite{ohta-ord2} for $p \ge 5$.

\begin{proof}[Rough sketch of proof of Theorem \ref{mainconj}]
	Since $\mc{T}_{\infty,\mf{m}}$ is ordinary at $p$,
	there is an $\mf{h}_{\infty,\mf{m}} \otimes_{\La_{\infty}} \mc{Q}_{\theta}$-basis 
	of $\mc{T}_{\infty,\mf{m}} \otimes_{\Lambda_{\infty}} \mc{Q}_{\theta}$
	with respect to which the action of inertia at $p$ is lower-triangular. Let $\rho = \smatrix{a&b\\c&d}$ be the representation
	with respect to this basis. Let $B$ denote the $\mf{h}_{\infty}$-span of the image of $b$. The Eisenstein reduction 
	$\bar{b} \colon G_{\Q} \to B/IB$ of $bd^{-1}$ is a cocycle that restricts to an unramified
	and surjective homomorphism on $G_{F_{\infty}}$.
	The intertwining of Galois and Hecke actions on $B$ shows that $\bar{b}$ provides a surjective map 
	$X_{\infty}(1)_{\theta} \to B/IB$ of 
	$\Lambda_{\infty,\theta}$-modules. Since $B$ is faithful as a $\mf{h}_{\infty,\mf{m},\theta}$-module, 
	the $\La_{\infty,\theta}$-Fitting ideal of $B/IB$
	is divisible by $f_{\theta}$. This and the known vanishing of the $\mu$-invariant ensure divisibility of the characteristic 
	ideal of $X_{\infty}(1)_{\theta}$ by $f_{\theta}$.\footnote{The argument must be modified if residual characters of
	the representation agree on $G_{\qp}$.} As we may replace $M$ by the prime-to-$p$ part of $\cond(\theta)$, this gives one 
	divisibility in the main conjecture for all even characters. 
	A standard argument using the analytic class number formula then forces the opposite divisibility.
\end{proof}

\subsection*{The map $\Upsilon$}

Motivated by the geometric proof of the main conjecture, we construct a map $\Upsilon \colon Y \to P$ in the opposite direction to $\varpi$ out of the Eisenstein quotient $T = \mc{T}/I\mc{T}$.

If $p \mid N$, then we have the canonical composition $\omega \colon (\Z/N\Z)^{\times} \twoheadrightarrow (\Z/p\Z)^{\times} \hookrightarrow \zp^{\times}$
of the quotient map and the lifting map. The original construction from \cite{sh-Lfn} (cf. \cite{fk-sh}) was of a map $\Upsilon_{\theta}$ in a $\theta$-eigenspace for an even character $\theta$ of $(\Z/Mp\Z)^{\times}$ satisfying the following hypotheses:  
\begin{equation} \label{hypotheses}
	p \ge 5, \quad p \mid N, \quad p \nmid \varphi(M), \quad M \mid \cond(\theta), \quad \theta \neq 1, \omega^{-2}, \quad \text{and} \quad \theta\omega(p) \neq 1.
\end{equation}
The latter condition on the associated primitive character of $\theta\omega$, which can be interpreted Galois-theoretically as nontriviality of $\theta\omega$ on $G_{\qp}$, is also the condition of residual distinguishability mentioned in the previous footnote.

In this section, we explain the construction of $\Upsilon$ equivariantly for any odd prime $p$ and level $N \ge 5$. The full details will be provided in a paper in preparation with Gyujin Oh  \cite{os}. The construction relies upon two dualities and an isomorphism. 

\begin{itemize}
	\item \textbf{Hida duality}\ \ 
	Let $\mf{M}^{\circ}$ denote the $\mf{H}$-submodule of $\mf{M}_{\Q}$ consisting of the modular forms $f$ with integral 
	Fourier coefficients $a_n(f)$ for $n \ge 1$. Hida \cite[Theorem 7.3.5]{hida} defined a perfect $\La$-balanced pairing
	$$
		\mf{H} \times \mf{M}^{\circ} \to \zp, \quad (T,f) \mapsto a_1(Tf),
	$$
	which induces a perfect pairing $\mf{h} \times \mf{S} \to \zp$.  Associating to $f \in \mf{M}^{\circ}$ the expression 
	$\sum_{d \in D} (\langle d \rangle^{-1} f) [d] \in \La_{\Q}\ps{q}$ provides an $\mf{H}$-module of such $\La$-adic
	forms that we identify with $\mf{M}^{\circ}$. The pairings 
	extend to perfect $\La$-module pairings $\mf{H} \times \mf{M}^{\circ} \to \La$ and $\mf{h} \times \mf{S} \to \La$.

	\item \textbf{Twisted Poincar\'e duality}\ \ 
	We have a modified form of Poincar\'e duality, related to Ohta's construction in \cite{ohta-es}. 
	Let $\La^{\iota}$ be the $\La[G]$-module on which $\sigma_a$ acts as $[a]$.
	The cup product of Poincar\'e duality on $\mc{T}$ yields a perfect, $\La$-bilinear pairing 
	$$
		(\ , \ ) \colon \mc{T}_{\mf{m}} \times \mc{T}_{\mf{m}} \to \La^{\iota}(1), \quad (x,y) 
		= \sum_{d \in (\Z/N\Z)^{\times}} (x \cup \langle d \rangle \tau_N y)[d]
	$$
	with the properties $(Tx,y) = (x,Ty)$ for $T \in \mf{h}$, and $(\sigma x, \sigma y) 
	= \sigma(x,y)$ for $\sigma \in G_{\Q}$.

	\item \textbf{Ordinary Eichler-Shimura}\ \ 
	Ohta's $p$-ordinary Eichler-Shimura theorem \cite{ohta-es,ohta-ord2} provides a 
	canonical exact sequence
	\begin{equation} \label{ord_ex_seq}
		0 \to \mc{T}_{\sub} \to \mc{T}_{\mf{m}} \to \mc{T}_{\quo} \to 0
	\end{equation}
	of $\mf{h}_{\mf{m}}[G_{\qp}]$-modules and isomorphisms $\mc{T}_{\quo} \cong \mf{S}_{\mf{m}}$ and 
	$\mc{T}_{\sub} \cong \mf{h}_{\mf{m}}$ of $\mf{h}_{\mf{m}}$-modules, the latter isomorphism 
	following from the above two dualities. 
	The $G_{\qp}$-action on $\mc{T}_{\quo}$ is unramified, with $U_p$ acting as the arithmetic Frobenius $\Fr_p$ if $p \mid N$,
	and the above isomorphisms become canonical upon Eisenstein reduction.
	Ohta supposes that $p \mid N$ and $p \ge 5$, but Vu \cite{vu} outlined an argument for
	$p = 3$. The $p \nmid N$ case follows via ordinary $p$-stabilization, identifying 
	$\mc{T}_{0,\mf{m}}$ with the ordinary $p$-old part of $\mc{T}_{1,\mf{m}}$.
\end{itemize}

First, we wish to employ Hida duality to construct an isomorphism 
\begin{equation} \label{Hecke_mod_Eis}
	\mf{h}/I \xrightarrow{\sim} \La/\mc{L}
\end{equation} 
for a certain nonzero ideal $\mc{L}$ of $\La$.

\begin{proof}[Detail of \eqref{Hecke_mod_Eis}]
	Let $\mc{C} = H_0(C_1(N),\zp)$, where $C_1(N)$ denotes the cusps of $X_1(N)$. 
	We have an $\mf{H}$-equivariant residue map $\Res \colon \mf{M}_{\Q} \to \mc{C} \otimes_{\zp} L$ 
	for some finite $L/\qp$, given by
	$$
		\Res(f) = \sum_{\alpha \in C_1(N)} w_{\alpha} a_{0,\alpha}(f) \tau_N(\alpha)
	$$ 
	for $w_{\alpha}$ the width of $\alpha$ and $a_{0,\alpha}(f)$ the constant coefficient at $\alpha$. 
	Write $\mc{E} = \sum_{d \in D} E_d [d]$ with $E_d \in \mf{M}^{\circ}$, and set $\kappa = \Res(E_1)$.
	The $I$-torsion submodule $\mf{M}^{\circ}[I]$ is the free $\La$-module on $\mc{E}$, and one can show that 
	$\mc{C}[I]_{\Q} = \La_{\Q} \kappa$.

	Let $\mf{M}' = \{ f \in \mf{M} \mid \Res(f) \in \mc{C}[I]\}$, and set $\mf{M}^{\sharp} = \mf{M}' + \La \mc{E}$.
	Let $\mc{C}^{\sharp} = \Res(\mf{M}^{\sharp}) = \mc{C}[I] + \La \kappa$. 
 	We then have an exact sequence of $\mf{H}$-modules
	\begin{equation} \label{sharp_mod_seq}
		0 \to \mf{S} \to \mf{M}^{\sharp} \to \mc{C}^{\sharp} \to 0.
	\end{equation}
	A result of Katz \cite[Key Lemma 6.1.9]{katz} ensures that $\Res(\mf{M}') = \mc{C}[I]$ and the perfectness of the
	Hida pairing between $\mf{M}^{\sharp}$ and the image 
     	of $\mf{H}$ in $\End_{\La}(\mf{M}^{\sharp})$.
        Applying this pairing  to $\mc{E}$ gives an isomorphism 
        $\mf{H}/I \xrightarrow{\sim} \La$. Dualizing \eqref{sharp_mod_seq} now yields \eqref{Hecke_mod_Eis} for some $\mc{L}$.
        
        We can describe $\mc{L}$ more precisely. Consider the map $\pi_0 \colon \mc{C}_{\Q} \to \La_{\Q}$ taking a sum of cusps 
	to its subsum of zero cusps and identifying the cusp $\frac{1}{d}$ with $[d]$ for each $d \in D$. 
	It is an isomorphism on $\mc{C}[I]_{\Q}$ and satisfies $\pi_0(\kappa) = a_{0,\infty}(\mc{E}) = \frac{1}{2}\xi^{\sharp}$.
	By Hida duality, $\mathcal L$ is the $\La$-ideal of $F(\kappa)$ with $F \in \Hom_{\Lambda}(\mathcal{C}^{\sharp},\La)$.
	Under the identification provided by $\pi_0$, any such $F$ becomes multiplication by some $u\in\Lambda_{\mathbb Q}$ 
	such that $u\pi_0(\mathcal C^\sharp)\subseteq\Lambda$, and $F(\kappa) = \frac12\xi^\sharp u$. Thus, we obtain
	$$
		\mc{L} = \tfrac{1}{2}\xi^{\sharp}\pi_0(\mc{C}^{\sharp})^{-1} = 
		\tfrac{1}{2}\xi^{\sharp} \pi_0(\mc{C}[I])^{-1} \cap \La,
	$$ 
	where the inverse means the inverse fractional ideal of $\La$. This yields Lafferty's computation \cite{lafferty} 
	of the characteristic ideal of $(\mf{h}/I)_{\infty,\theta}$ for each $\theta$.
 \end{proof}
        
Now let us begin the construction of $\Upsilon$. Let $Q$ denote the $\mf{h}[G_{\Q}]$-module $(\mf{h}/I)^{\iota}(1)$, where $(\mf{h}/I)^{\iota}$ is the $\mf{h}$-module $\mf{h}/I$ with $G_{\Q}$-action factoring through $G$ such that $\sigma_a$ acts as $\langle a \rangle$.

\begin{step} \label{canon_surj}
	Define a canonical surjection $\phi \colon T \to Q$ of $\mf{h}[G_{\Q}]$-modules.
\end{step}

\begin{proof}[Construction of $\phi$]
	We first modify the \'etale cohomology group
	$$
		\tilde{\mc{T}} = H^1_{\et}(Y_1(N)_{/\qbar},\zp(1)) \cong H_1(X_1(N),C_1(N),\zp)
	$$
	analogously to how $\mf{M}^{\sharp}$ is defined from $\mf{M}$. The $I$-torsion in the cokernel of $\mc{T} \to \tilde{\mc{T}}$ 
	may be identified with $\mc{C}[I]$. We take $\tilde{\mc{T}}^{\sharp}$ to be the sum of $\Res^{-1}(\mc{C}[I])$ and the $\La$-span 
	of $t(\kappa)$ for the Manin-Drinfeld splitting $t \colon \mc{C}[I]_{\Q} \to \tilde{\mc{T}}_{\Q}$.
	Since zero cusps are $G_{\Q}$-fixed, for each $\sigma \in G_{\Q}$, the difference $(\sigma-1)t(\kappa)$ lies in $\mc{T}_{\Q}[I] = 0$.
	That is, $t(\kappa)$ is $G_{\Q}$-invariant.
	In fact, it is the Kummer class of $g^{-1}$ for $g = g_{0,\frac{1}{N}}$.  Since $\lambda g \in \tilde{\mc{T}}$ for 
	$\lambda \in \La$ if and only if $\lambda \mc{E} \in \mf{M}$, the sequence
	\begin{equation} \label{cohom_sharp_seq}
		0 \to \mc{T} \to \tilde{\mc{T}}^{\sharp} \to \mc{C}^{\sharp} \to 0
	\end{equation}
	of $\mf{H}[G_{\Q}]$-modules is exact, where the $G_{\Q}$-action on $\mc{C}^{\sharp}$ is trivial.

	Let $\mc{B}$ denote the reduced cohomology group $\bar{H}^0_{\et}(C_1(N)_{\qbar},\zp(1))$ so that
	$$	
		\mc{T}_c = H^1_{c,\et}(Y_1(N)_{/\qbar},\zp(1)) \cong H_1(Y_1(N),\zp)
	$$ 
	has $\mc{T}$ as a quotient with kernel $\mc{B}$.
	Twisted Poincar\'e duality extends to a pairing $\tilde{\mc{T}} \times \mc{T}_c \to \La^{\iota}(1)$.
	Setting $\mc{B}^{\sharp} = \Hom_{\La}(\mc{C}^{\sharp},\La^{\iota}(1))$ and 
	$\mc{T}_c^{\sharp} = \Hom_{\La}(\tilde{\mc{T}}^{\sharp},\La^{\iota}(1))$, 
	we have an exact sequence 
	$$
		0 \to \mc{B}^{\sharp} \to \mc{T}_c^{\sharp} \to \mc{T} \to 0,
	$$
	where here $\mc{T} \cong \Hom_{\La}(\mc{T},\La^{\iota}(1))$
	via left twisted pairing with elements of $\mc{T}$.
	The surjection $\Phi_c \colon \mc{T}_c^{\sharp} \to \La^{\iota}(1)$ of
	$\mf{H}[G_{\Q}]$-modules given by evaluation on $t(\kappa)$
	has image $\mc{L}$ on $\mc{B}^{\sharp}$,
	so it induces a map $\mc{T} \to (\La/\mc{L})^{\iota}(1)$ which by \eqref{Hecke_mod_Eis} is exactly the desired surjection
	$\phi \colon T \to Q$ of $\mf{h}[G_{\Q}]$-modules.
\end{proof}

\begin{remark} \label{integral_Siegel}
	In Step \ref{canon_surj}, we used the known property of $g = g_{0,\frac{1}{N}} \in \tilde{\mc{T}}_{\Q}$ 
	that $\lambda g \in \tilde{\mc{T}}$ for 
	$\lambda \in \La$ if and only if $\lambda \xi^{\sharp} \in \La$. Here is one way to see the less straightforward ``if'' implication 
	using Betti-\'etale comparison. The ideal of $\Lambda$ making $\xi^{\sharp}$ integral is 
	$$
		J = (1-c^2[c] \mid (c,6Mp) = 1).
	$$ 
	Let $\mb{T} = \R^2/\Z^2$, with $\Gamma_1(N)$ acting on the left. 
	As in \cite[\S 3.2]{bcg}, there is a unique trace-fixed away from $c$, equivariant class
	$m_c \in H^1_{\Gamma_1(N)}(\mb{T}-\mb{T}[c],\zp)$ with residue $\mb{T}[c]-c^2(0)$ for $c$ prime to $Mp$. 
	Set $v = (\frac{1}{N},0) \in \mb{T}[N]$. Pulling back $m_c$ by 
	$c^{-1}v$ yields an element $\upsilon_c \in H^1(\Gamma_1(N),\zp) = H^1(Y_1(N)(\C),\zp)$.
	At the same time, \cite[\S 2]{xu-eis} (cf.~\cite[Theorem 21]{bcg}) identifies a class $\vartheta \in 
	H^1(\Gamma_1(N),\Q)$ that is Eisenstein away from the level, has 
	boundary in $H_0(C_1(N),\qp)$ equal to $\kappa$, and satisfies $\upsilon_c = (1-c^2\langle c \rangle)\vartheta$.
	The boundary map is injective on the away-from-$N$ Eisenstein part, so 
	$\vartheta$ corresponds to $-g$ under integral Betti-\'etale comparison, 
	and therefore $\upsilon_c$ corresponds to $(c^2[c]-1)g \in \tilde{\mc{T}}$.
\end{remark}

The exact sequence of ordinary $\mf{H}[G_{\qp}]$-quotients of \eqref{cohom_sharp_seq} is isomorphic to 
the ordinary part of
$$
	0 \to \mf{S} \to \mf{M}^{\sharp} \to \mc{C}^{\sharp} \to 0.
$$
Eichler-Shimura on $I$-torsion gives a canonical isomorphism $\tilde{\mc{T}}[I]_{\Q} \xrightarrow{\sim} \mf{M}[I]_{\Q}$ that sends $t(\kappa)$ to $\mc{E}$. Thus, we can think of $\phi$ as being induced by pairing with $\mc{E}$.
One can also construct $\phi$ as an $\mf{h}$-module map by integrating against $\mc{E}$.

\begin{proof}[The map $\phi$ as integration]
For $\mc{N} = H_1(Y_1(N),\zp)$, we have an exact sequence
$$
	0 \to \mc{B} \to \mc{N} \to H_1(X_1(N),\zp) \to 0.
$$
Betti-\'etale comparison identifies $\mc{N}$ with $\mc{T}_c$, and the analogous construction yields $\mc{N}^{\sharp} \cong \mc{T}_c^{\sharp}$.
The map
$$
	\Phi_{\mr{dR}} \colon \mc{N}^{\sharp} \to (\C \otimes_{\Z} \zp)[D], \quad
	\Phi_{\mr{dR}}(\gamma) = -2\int_{\gamma} \mc{E}(z) dz
$$
is valued in the image of $\La_{\Q}$ since
\begin{equation} \label{Eis_Siegel}
	2\pi i E_1(z)dz = -\dlog(g(\tau_N z)).
\end{equation}
The maps $\Phi_{\mathrm{dR}}$ and $\Phi_c$ agree on $\mathcal B^\sharp$, as the integrals of $E_d(z)dz$ along simple positively oriented loops about cusps are residues of $E_d$, with the factor of $-2$ arising from the ordering of and summation indices in the twisted Poincar\'e pairing. The difference $\Phi_{\mathrm{dR}} - \Phi_c$ on $\mc{N}^{\sharp}$ then factors through an Eisenstein map 
$H_1(X_1(N),\zp) \to \La_{\Q}$, which must be zero.
\end{proof}

For the next step of the construction, we let $K = \ker \phi$ so that we have an exact sequence
of $(\mf{h}/I)[G_{\Q}]$-modules 
$$
	0 \to K \to T \xrightarrow{\phi} Q \to 0
$$
with $K^+ \cong P$.  We write $T_{\sub} = \mc{T}_{\sub}/I\mc{T}_{\sub}$ and $T_{\quo} = \mc{T}_{\quo}/I\mc{T}_{\quo}$. Ordinary Eichler-Shimura and Hida duality tell us that $T_{\sub} \to Q$ is a surjective map of free $\mf{h}/I$-modules of rank one, which must then be an isomorphism. Consequently, $\phi$ is split as a map of $\mf{h}[G_{\qp}]$-modules by the inverse $s$ of $\phi|_{T_{\sub}}$, and $K \to T_{\quo}$ is an isomorphism as well. 

In any $\theta$-quotient of $\Lambda$ for a character $\theta$ of $(\Z/Mp\Z)^{\times}$ such that $\theta\omega(p) \neq 1$ in the residue field of $\mc{O}_{\theta}$, i.e.,  for which the two residual characters of $\mc{T}_{\mf{m},\theta}$ are distinct on $G_{\qp}$,
the $\theta$-quotient of the ordinary exact sequence \eqref{ord_ex_seq} is $\mf{h}_{\theta}$-split, and in fact, $(\mc{T}_{\mf{m},\theta})^+ \to \mc{T}_{\quo,\theta}$ is an isomorphism so that $K_{\theta} = P_{\theta}$. However, the unreduced ordinary sequence does not split in general. In fact, a computation using Codex \cite{os} showed that for $p = 5$, $N = 2995$, and $\theta\omega$ the quadratic character attached to $\Q(\sqrt{-599})$, the $\theta$-eigenspace of \eqref{ord_ex_seq} is not $\mf{h}_{\theta}$-split, and the minus part $K^-_{\theta}$ is nonzero and cyclic.

Though the splitting of \eqref{ord_ex_seq} is heavily used in earlier work, we can actually avoid it.

\begin{step} 
	Show that there is an exact sequence of $(\mf{h}/I)[G_{\Q}]$-modules 
	\begin{equation} \label{keyseq}
		0 \to P \to T' \xrightarrow{\phi'} Q \to 0,
	\end{equation}
	for a subquotient $T'$ of $T$, where $\phi'$ is induced by $\phi$. 
\end{step}

\begin{proof}[Sketch of proof.]
	Let $\rho \colon G_{\Q} \to \Aut_{\La}(T)$ be given by the Galois action. Write $\rho$ as a generalized matrix representation
	$\smatrix{a&b\\c&d}$ with respect to the decomposition $T = P \oplus T^-$, using the identification $T^+ \cong P$. 
	Let $\delta \colon G_{\Q} \to (\mf{h}/I)^{\times}$
	denote the Galois character of $Q$.
	From the relation 
	$$
		\rho(\Fr_{\ell})^2 - T_{\ell} \rho(\Fr_{\ell}) + \ell \langle \ell \rangle = 0
	$$ 
	on a Frobenius $\Fr_{\ell}$ at $\ell \nmid Mp$ 
	and the identity $\rho(\varsigma) = \smatrix{1&0\\0&-1}$, one may check that $a = 1$, $d = \delta$, and
	$b(\sigma)c(\tau) = c(\tau)b(\sigma) = 0$ for all $\sigma, \tau \in G_{\Q}$.
	
	Let 
	$$
		C = \sum_{\sigma \in G_{\Q}} c(\sigma)(P) \subseteq K^-.
	$$ 
	The injective image of $P$ in $T/C$ is a trivial $G_{\Q}$-submodule. Since $d = \delta$, Galois acts on $T/(P + C)$ via
	$\delta$. Since $\phi$ is trivial on the $\zp[G_{\Q}]$-span $P + C$ of $P$, the map $\phi$ factors through $T/(P + C)$.
	Since $G_{\Q}$ acts on $Q$ and $T/(P+C)$ through $\delta$, the induced splitting $Q \to T/(P + C)$ 
	is $G_{\Q}$-equivariant. Thus $T' = (P + C + s(Q))/C$ is a $G_{\Q}$-subquotient of $T$ with the desired properties.
\end{proof}

The $\La$-module $Y$ is canonically isomorphic to $H^2_{\et}(\Z[\mu_N,\tfrac{1}{p}],\zp(2))_+$ by a result of Tate \cite{tate}. 
Let $\Gamma = \Gal(F_{\infty}/\Q(\mu_N)^+)$.
The canonical map $X_{\infty}(1)_{\Gamma} \to Y$ is an isomorphism in any $\theta$-quotient with $\theta\omega(p)$
not residually equal to $1$.
The exact sequence \eqref{keyseq} provides the class of a cocycle
$$
	b' \colon G_{\Q} \to \Hom_{\mf{h}}(Q,P), \quad b'(\sigma)(q) = \sigma s'(\sigma^{-1}q) - s'(q),
$$
where $s' \colon Q \to T'$ is induced by $s$. 
This cocycle is unramified outside $Mp\infty$ and restricts to an unramified homomorphism on $G_{F_{\infty}}$, inducing a $\La$-module homomorphism $X_{\infty}(1)_{\Gamma} \to P$ after evaluation at $1 \in Q$. This provides the desired map $\Upsilon_{\theta}$ on eigenspaces satisfying \eqref{hypotheses}, as in \cite{sh-Lfn, fk-sh}. However, under \eqref{hypotheses} except that $\theta\omega(p) =1$, Shih and Wang \cite{shihwang} proved that $X_{\infty}(1)_{\theta} \to P_{\infty,\theta}$ is zero after localizing at the prime of $\La_{\infty,\theta}$ giving the trivial zero.

\begin{step}
	Construct a $\La$-module homomorphism $\Upsilon \colon Y \to P$ as a connecting map in an exact sequence derived
	from \eqref{keyseq}.
\end{step}

\begin{proof}[Outline of construction.]
	Pull back \eqref{keyseq} along the surjection $\La^{\iota}(1) \to Q$ to obtain an exact sequence
	\begin{equation} \label{pullkey}
		0 \to P \to \tilde{T} \xrightarrow{\tilde{\phi}} \La^{\iota}(1) \to 0.
	\end{equation}
	For a prime $\ell \mid M$ and $\sigma, \tau \in G_{\Q_{\ell}}$ with $\tau$ in the inertia subgroup $I_{\ell}$, 
	one has $(\rho(\sigma)-\delta(\sigma))(\rho(\tau)-1) = 0$. One can apply this with $\sigma = \Fr_{\ell}$ and $\tau$ a lift of a 
	topological generator of the pro-$p$ tame quotient of $I_{\ell}$ to show that $\tilde{\phi}$ remains surjective 
	on $I_{\ell}$-invariants for $\ell \mid M$, and in fact modulo any power of $p$.
	Therefore, the pushforward of \eqref{pullkey} by $j \colon \Spec \Z[\frac{1}{Mp}] \to \Spec \Z[\frac{1}{p}]$ is exact.
	The map $\tilde{\phi}$ has a $\La[G_{\qp}]$-module splitting $\tilde{s}$ induced by $s'$.
	Let $\tilde{t} = \id - \tilde{s} \circ \tilde{\phi} \colon
	\tilde{T} \to P$ be the $\La[G_{\qp}]$-module retraction induced by $\tilde{s}$.
	We can define a Selmer complex $R\Gamma_{f,\et}(\Z[\frac{1}{p}],j_*\tilde{T}(1))$ as the shift by $-1$ of the cone of the composition
	$$
		R\Gamma_{\et}(\Z[\tfrac{1}{p}],j_*\tilde{T}(1)) \xrightarrow{ \tilde{t}\, \circ\, \res_p } R\Gamma_{\et}(\qp,P(1)),
	$$ 
	where $\res_p$ denotes the localization map at $p$. It fits in a distinguished triangle
	$$
		R\Gamma_{c,\et}(\Z[\tfrac{1}{p}],P(1)) \to 
		R\Gamma_{f,\et}(\Z[\tfrac{1}{p}],j_*\tilde{T}(1)) \to R\Gamma_{\et}(\Z[\tfrac{1}{p}],j_*\La^{\iota}(2))
		\to R\Gamma_{c,\et}(\Z[\tfrac{1}{p}],P(1))[1].
	$$
	The second connecting map in the corresponding long exact sequence gives
	$$
		\Upsilon \colon Y \xrightarrow{\sim} H^2_{\et}(\Z[\tfrac{1}{p}],j_*\La^{\iota}(2)) \to H^3_{c,\et}(\Z[\tfrac{1}{p}],P(1)) 
		\xrightarrow{\sim} P,
	$$
	where the first isomorphism uses Shapiro's
	lemma and corestriction, and the last isomorphism is the invariant map of Poitou-Tate duality.
\end{proof}

We make the following conjecture.

\begin{conjecture} \label{Upsilon_isom}
	The map $\Upsilon \colon Y \to P$ is an isomorphism.
\end{conjecture}

Under \eqref{hypotheses}, the following results are known.
Using the theory of pseudo-representations, Wake and Wang-Erickson \cite{wwe} showed that if $Y_{\infty,\omega^{-2}\theta^{-1}}$ is finite, then $\Upsilon_{\infty,\theta}$ is injective with finite cokernel, and moreover
that it is an isomorphism upon localizing away from the support of $Y_{\infty,\omega^{-2}\theta^{-1}}$ under the involution $[a] \mapsto a^{-2}[a]^{-1}$.
Ohta \cite{ohta-mu, ohta-CM} proved that if the induced character on $(\Z/p\Z)^{\times} \hookrightarrow (\Z/N\Z)^{\times}$ has nontrivial kernel, then the map $\Upsilon_{\theta}$ is an isomorphism; to do so, he used the Zariski density of CM points in products of modular varieties in characteristic $p$ to prove the vanishing of $\mu$-type subgroups of $J_1(Mp)$.

\subsection*{Combining the ingredients}

The following conjecture, which subsumes Conjectures \ref{varpi_isom} and \ref{Upsilon_isom}, is an equivariant extension of the central conjecture in the theory (cf.~\cite[Conjecture 4.12]{sh-Lfn}). In the inverse limit over $r$, it may be viewed as an explicit refinement of the Iwasawa main conjecture.

\begin{conjecture} \label{key_conj}
	The maps $\Upsilon$ and $\varpi$ are inverse to each other.
\end{conjecture}

Let us return to known results and work under \eqref{hypotheses}. Let $\chi \colon \zp^{\times} \to \zp$ denote the character $(1-p^{-1})\log_p$, let $t \in 1+p\zp$ with $\chi(t) = 1$, and set $X = [t]-1$ so that $\La_{\infty,\theta} = \mc{O}_{\theta}\ps{X}$. Let $\xi'_{\theta}$ denote the image in $\La_{\theta}$ of $\xi'_{\infty,\theta} = (1+X) \frac{d}{dX}\xi_{\infty,\theta}^{(p)}$. The following identity of Fukaya and Kato is a slight modification of \cite[Theorem 7.2.3(1)]{fk-sh} found in \cite[Theorem 5.21]{sh-fk}.

\begin{theorem} \label{fkident}
	We have $\xi'_{\theta} \Upsilon_{\theta} \circ \varpi_{\theta} = \xi'_{\theta}$ as 
	endomorphisms of $P_{\theta}$.
\end{theorem}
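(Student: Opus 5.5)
Under \eqref{hypotheses} I will follow Fukaya and Kato \cite{fk-sh} and argue in the $\Lambda$-adic limit, descending to finite level $r$ only at the end. The statement compares two maps, so the plan is to compute both explicitly on Manin symbols and identify the composite $\Upsilon_\theta\circ\varpi_\theta$ with a derivative of the $p$-adic $L$-function.

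\textbf{Step 1: make $\varpi$ explicit.} Use the zeta map $z^{\circ}$ and its integral stabilization. Via the specialization $\infty^*$, $\varpi$ sends the class of a Manin symbol $[u:v]$ to the cup product of cyclotomic units $\{1-\zeta_N^u,1-\zeta_N^v\}$. Realize this cup product as the specialization at $\infty$ of $g_{0,u/N}\cup g_{0,v/N}$ in $H^2_{\et}(Y_1(N),\qp(2))^{\ord}$.

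\textbf{Step 2: express $\Upsilon$ through the extension.} Recall that $\Upsilon$ is the connecting map coming from the Galois extension \eqref{keyseq}. On the image of a cup product $a\cup b$ of units in $H^2_{\et}(\Z[\mu_N,\frac1p],\zp(2))$, the connecting map can be computed as a cup product of the Kummer class $a$ with the cocycle $b'$, paired using $b$. In other words, it is a Massey-type product against the extension class of $T'$.

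The extension class of $T$ restricted to the boundary should be governed by the Kummer class of the Siegel unit $g$ with $t(\kappa)$, coming from \eqref{cohom_sharp_seq}. This suggests lifting to the full cohomology $\tilde{\mc{T}}^\sharp$, where the Siegel units live.

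\textbf{Step 3: the key reciprocity computation.} Work in the $\Lambda_\infty$-adic ordinary cohomology of $Y_1(Mp^r)$ and take the Poitou--Tate pairing of $z^{\circ}([u:v])$ against the $\Lambda$-adic Eisenstein class. Then compare two evaluations of this pairing:
\begin{itemize}
\item \emph{geometrically}, through the boundary or residue at the cusps, which returns the Manin symbol times the constant term $\frac12\xi^\sharp$ of $\mc{E}$;
\item \emph{arithmetically}, through $\Upsilon$, using the splitting $s$ of \eqref{ord_ex_seq} modulo $I$.
\end{itemize}

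The derivative $\xi'$ should enter as follows. Reducing modulo $I$ forces division by $\xi_\infty^{(p)}$. The local ordinary condition at $p$, via $U_p=\Fr_p$ on $\mc{T}_{\quo}$ together with Coleman/Kato explicit reciprocity for $\Lambda$-adic Siegel unit cup products, contributes the factor $(1+X)\frac{d}{dX}$ of the characteristic element. The result I would aim for is an identity of the form $\xi'_{\infty,\theta}\,\Upsilon\varpi(x)=\xi'_{\infty,\theta}\,x$ in $P_{\infty,\theta}$. Specializing to level $r$ then gives the theorem; this is consistent with the statement holding only after multiplying by $\xi'_\theta$, since $P_\theta$ may have $\xi'_\theta$-torsion.

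\textbf{Main obstacle.} The hardest point is the reciprocity computation in Step 3: identifying the connecting map for \eqref{keyseq} with a derivative of the Eisenstein constant term. This requires the Fukaya--Kato analysis of $\Lambda$-adic de Rham and crystalline periods of $\mc{T}$ near the Eisenstein component, i.e.\ Ohta's $\Lambda$-adic Eichler--Shimura with $\mc{T}_{\quo}\cong\mf{S}_{\mf m}$ and $\mc{T}_{\sub}\cong\mf h_{\mf m}$. My first approach would be to reduce to a rank-one computation on the free module $\mf h/I\cong\Lambda/(\xi^{(p)})$ from \eqref{Hecke_mod_Eis}, and to use Kato's explicit reciprocity law to compute the local part.
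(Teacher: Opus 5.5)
Your outline has the right background ingredients: zeta elements, the local splitting at $p$ from the ordinary filtration, Coleman maps and Kato's explicit reciprocity. But it is missing the structural step that makes $\Upsilon_\theta\circ\varpi_\theta$ computable. The paper applies Hochschild--Serre to the integral, Eisenstein-reduced zeta map to get $\bar z_\theta\colon P_\theta\to H^1_{\et}(\Z[\tfrac{1}{Mp}],T_\theta(1))$. It then uses $\phi_\theta\circ\bar z_\theta=\varpi_\theta$, where $H^1_{\et}(\Z[\tfrac{1}{Mp}],Q_\theta(1))$ is identified with $Y_\theta$ by viewing $Q_\theta$ as the cokernel of $\tfrac12\xi^{(p)}_\theta$ on $\La^{\iota}_\theta(1)$. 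Now $\varpi_\theta(x)$ is the image of a \emph{global} class with coefficients in $T_\theta$. So the connecting map of \eqref{keyseq} on it, which is $-\tfrac12\xi'_\theta\Upsilon_\theta(\varpi_\theta(x))$ after identifying $H^2_{c,\et}(\Z[\tfrac1{Mp}],P_\theta(1))$ with $P_\theta$, can be computed locally. One restricts $\bar z_\theta(x)$ to $G_{\qp}$, applies the local retraction $T_\theta\to P_\theta$, and then applies a finite-level Coleman map $\mathrm{Col}^\flat$.

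Your Step 1 obtains $\varpi$ only through the specialization $\infty^*$. By itself this gives no lift of $\varpi(x)$ to $T$, so it never interacts with $\Upsilon$. Your Step 2 would evaluate $\Upsilon$ on $\{1-\zeta_N^u,1-\zeta_N^v\}$ as a Massey-type product of two cyclotomic Kummer classes against $b'$. You cannot carry that out directly; it is exactly the hard computation the argument is designed to avoid.

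Your mechanism for the derivative also does not work. A `geometric' evaluation returning the Manin symbol times $\tfrac12\xi^\sharp$ is zero in $P_\theta$. This is because $\tfrac12\xi^\sharp_\theta=\tfrac12\xi^{(p)}_\theta$ generates $\mc{L}_\theta$ and so annihilates $(\mf{h}/I)_\theta$. There is no operation of `dividing by $\xi$' in $P_\theta$ that could then produce $\xi'_\theta$.

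In the paper, $\xi'_\theta$ comes from logarithms. On $\mc{O}_\theta\ps{X}$, the operator $(1+X)\tfrac{d}{dX}$ is the derivation sending $[a]$ to $\chi(a)[a]$ for $a\in 1+p\zp$. This appears globally when the connecting map of \eqref{keyseq} is compared with $\Upsilon_\theta$; the identification of $H^2_{c,\et}(\Z[\tfrac1{Mp}],P_\theta(1))$ with $P_\theta$ is where $\chi$ enters. It appears locally through $\mathrm{Col}^\flat$, which is built from a modified $p$-adic logarithm.

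The essential input is the $p$-adic regulator computation on Beilinson--Kato elements, done up the cyclotomic and level towers and then descended. It shows that $\mathrm{Col}^\flat$ applied to the local splitting of $\res_p\bar z_\theta$ is multiplication by $-\tfrac12\xi'_\theta$. A diagram chase equating the global and local computations then gives the theorem. Your `main obstacle' defers exactly this comparison without a workable mechanism. Working $\La_\infty$-adically and specializing at the end is fine in principle, but it does not supply these missing steps.
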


An equivariant extension of Theorem \ref{fkident} should hold via the Fukaya-Kato method, having the form that for every $g \in \mc{L}_{\infty}$, one has $\frac{dg}{dX} \Upsilon_{\infty} \circ \varpi_{\infty} = \frac{dg}{dX}$. We sketch a few features of the proof of the original result at finite level. See \cite{fks-iwa} for a more thorough exposition.

\begin{proof}[Notes on the proof of Theorem \ref{fkident}.]
	We start with the Eisenstein reduction of Hochschild-Serre applied to the integral restriction \cite[Theorem 5.19]{sh-fk} 
	of the Hecke-equivariant zeta map $z^{\circ}$ to the $\theta$-eigenspace of $H_1(X_1(N),\zp)$, which has the form
	$\bar{z}_{\theta} \colon P_{\theta} \to H^1_{\et}(\Z[\tfrac{1}{Mp}],T_{\theta}(1))$.
	To prove Theorem \ref{fkident}, one compares two compositions of maps that start with $\bar{z}_{\theta}$.
	First, we have $\phi_{\theta} \circ \bar{z}_{\theta} = \varpi_{\theta}$ under an
	identification $H^1_{\et}(\Z[\tfrac{1}{Mp}],Q_{\theta}(1)) \cong Y_{\theta}$ that comes from viewing $Q_{\theta}$ as the
	cokernel of multiplication by $\frac{1}{2}\xi_{\theta}^{(p)}$ on $\La_{\theta}^{\iota}(1)$. The connecting
	map for \eqref{keyseq} gives a map $Y_{\theta} \to H^2_{c,\et}(\Z[\tfrac{1}{Mp}],P_{\theta}(1)) \cong P_{\theta}$
	that agrees with $-\frac{1}{2}\xi'_{\theta}\Upsilon_{\theta}$.
	
	Second, we compose the $G_{\qp}$-restriction of $\bar{z}_{\theta}$ with the map induced by the local splitting
	$T_{\theta} \to P_{\theta}$. We follow that by a finite-level Coleman map $\mr{Col}^{\flat} \colon 
    	H^1_{\et}(\qp,P_{\theta}(1)) \to P_{\theta}$ induced by a modified $p$-adic logarithm on $\zp^{\times}$. The result of a sophisticated
	$p$-adic regulator computation on Beilinson-Kato elements that first takes place up the cyclotomic and level towers before 
	descending tells us that the composition just described is multiplication by $-\frac{1}{2}\xi'_{\theta}$.
	This equals the first composition $-\frac{1}{2}\xi'_{\theta}\Upsilon_{\theta} \circ \varpi_{\theta}$ by a diagram chase.
\end{proof}

\begin{remark}
	In \cite{sh-Lfn}, we conjectured that $\Upsilon_{\theta} \circ \varpi_{\theta}$ is
	multiplication by a canonical unit $u_{\theta} \in (\mf{h}/I)_{\theta}^{\times}$. Uncertain of normalizations, 
	the author removed speculation that $u_{\theta} = 1$ in the final version.
	When Fukaya and Kato proved Theorem \ref{fkident}, we gladly made the stronger conjecture. 
	However, in writing \cite{sh-fk}, we uncovered sign issues in \cite{fk-sh} which meant that $u_{\theta} = -1$. 
	As Kato had pointed out, we did not make clear in \cite{sh-Lfn} on which side one was taking a cup product.
	Thus, we made the opposite choice to \cite{fk-sh}, changing the sign of $\Upsilon_{\theta}$.
	
	We recently realized that the value of the $p$-adic regulator on the right-hand side of the Fukaya-Kato identity 
	in the normalization of \cite{fk-sh} is $\frac{1}{2}\xi'_{\theta}$, instead of $\xi'_{\theta}$. 
	In that $\mc{E}_{\theta}$ has constant coefficient $\frac{1}{2} \xi^{\sharp}_{\theta} = \frac{1}{2}\xi^{(p)}_{\theta}$, 
	rather than $\xi^{\sharp}_{\theta}$, the map $\Upsilon_{\theta}$ of this paper is twice that of \cite{sh-fk}. Thus, a factor 
	of $\frac{1}{2}$ appears on both sides of the corrected identity.
\end{remark}

\section*{Euler systems from Eisenstein series}

\subsection*{Modular constructions of cyclotomic units}

We turn to the geometric construction of the Euler system of cyclotomic units, completing the circle of ideas in our program. There is a second global extension, constructed from a relative \'etale cohomology group of $Y_1(N)$, that we discovered through a shadow of it in \cite[\S 9.6]{fk-sh}. Its class is the real projection of the leading coefficient $1-\zeta_N$  of the Siegel unit 
$$
	g = q^{\frac{1}{12}} (1-\zeta_N) \prod_{n=1}^{\infty} (1-q^n \zeta_N)(1-q^n\zeta_N^{-1}) \in \mc{O}_{Y_1(N)}^{\times} \otimes_{\Z} \Q.
$$
The Euler system relations among these classes as $N$ varies can be read off from the divisors of $g$. This was the subject of unpublished work of Wake and the author \cite{shwa} begun in 2015. 

In 2016, Wake and I discovered that something very close to this construction long predated our work.
In weight two, the class was studied by Anderson (unpublished), Harder \cite{harder}, and Brinkmann \cite{brinkmann}. Nevertheless, our emphasis on the geometric construction of an Euler system appeared new.

The existence of the Siegel unit $g$ on $Y_1(N)$ is a seemingly rare circumstance of a motivic object pulling back to an Euler system, while Eisenstein series are plentiful. In our setting, the Eisenstein series $\mc{E}$ is determined by its residue $\kappa = \Res(E_1)$, which by \eqref{Eis_Siegel} is the negative of the divisor of $g$. With an eye towards generalizations, we thereby came to the question of how to extract the class of $1-\zeta_N$ directly from $\mc{E}$, obscuring the role of $g$.

Wake and I did not answer this question satisfactorily at the time. The author shared ideas from our construction with Skinner, who saw how to use $p$-adic Hodge theory to obtain nontrivial integral classes from Eisenstein series. In \cite{skinner}, he explained how our construction is amenable to this approach. Sangiovanni Vincentelli and Skinner refined this method to construct more general Euler systems \cite{sask}. The full details of one such construction for modular curves may be found in the preprint \cite{sst} of Shang, Skinner, and Trejos Su\'arez.

\subsection*{Our approach}
 
We outline an equivariant construction of $1-\zeta_N$ from $\mc{E}$, providing an updated take on \cite{shwa}.

\setcounter{step}{0}
\renewcommand{\theHstep}{cyclotomic.\arabic{step}}
\begin{step} Construct a partially supported cohomology group. \end{step}

\begin{proof}[Outline of construction.]
Consider the relative, or partially compactly supported, cohomology group 
$$
	\breve{\mc{T}}_c = H^1_{\et}(Y_1(N)_{\infty/\qbar}, C_{\infty/\qbar},\zp(1)) = H^1_{\et,C_{\infty}}(Y_1(N)_{/\qbar},\zp(1))
$$ 
for the closed subscheme $C_{\infty}$ of $X_1(N)$ of cusps over $\infty \in X_0(N)$ and 
the union $Y_1(N)_{\infty} = Y_1(N) \cup C_{\infty}$.
Note that $\bar{H}^0_{\et}(C_{\infty/\qbar},\zp(1))$ 
is isomorphic to $\bar{\La}^{\iota}(1)$, where $\bar{\La}$ denotes the quotient of $\La$ by its norm subgroup.
Setting $\breve{\mc{T}} = H^1_{\et}(Y_1(N)_{\infty/\qbar},\zp(1))$, we have an
exact sequence
\begin{equation} \label{part_cpt_supp_seq}
	0 \to \bar{\La}^{\iota}(1) \to  \breve{\mc{T}}_c \to \breve{\mc{T}} \to 0.
\end{equation}
The group
$\breve{\mc{T}}$ fits in its own short exact sequence, containing $\mc{T}$ with quotient the reduced homology group $\bar{H}_0(C'_{\infty},\zp)$ of the non-infinity cusps $C'_{\infty}$. 
\end{proof}

We might next pull back \eqref{part_cpt_supp_seq} by a map $\Lambda \to \breve{\mc{T}}$ sending $1$ to a $G_{\Q}$-fixed class to obtain a class in $H^1_{\et}(\Z[\frac{1}{Mp}],\bar{\La}^{\iota}(1))$. However, the class $-t(\kappa)$ is $G_{\Q}$-fixed but not even in $\breve{\mc{T}}_{\Q}$, since its divisor has coefficient $\frac{1}{12}$ at every infinity cusp. Moreover, $H^1_{\et}(\Z[\frac{1}{Mp}],\bar{\La}^{\iota}(1))$ is not the $p$-completion of the $Mp$-unit group of $\Q(\mu_N)^+$ but its quotient by $\Z[\frac{1}{Mp}]^{\times} \otimes \zp$. 
The first issue can be dealt with by replacing $Y_1(N)$ with the punctured line bundle $\mc{O}_{Y_1(N)_{\infty}}(C_{\infty})^{\times}$ over $Y_1(N)_{\infty}$. The second issue can be handled by taking cohomology of $Y_1(N) \cup C_0$ relative to $C_0$. However, doing so creates the new issue that the relative cohomology has trivial
residue maps on zero cusps, which we can again remedy by using a product of punctured line bundles for the individual zero cusps.

\begin{step} Modify the extension.\end{step}

\begin{proof}[Outline of construction.]
For a line bundle $L$, let $L^{\times}$ denote the bundle with its zero section removed.
Let $Y' = Y_1(N)_{\infty} \cup C_0$, where $C_0$ denotes the cusps over $0$ on $X_0(N)$, and let 
$$
	\tilde{Y}' = \mc{O}_{Y'}(C_{\infty})^{\times} \times_{Y'} \prod_{x \in C_0} \mc{O}_{Y'}(\{x\})^{\times}.
$$ 
For $x \in C_{\infty} \cup C_0$, let $t_x$ be the formal parameter  of the corresponding Tate curve. 
We lift $x$ to $\tilde{x} \in \tilde{Y}'$ by $t_x^{-1}$ on the factor corresponding to the divisor containing $x$ and the canonical
section on the others.
Let $\tilde{C}_{\infty}$ and $\tilde{C}_0$ be the resulting lifts of $C_{\infty}$ and $C_0$, respectively.
Set 
\begin{eqnarray*}
	\breve{\mc{T}}_{\infty,0} = H^1_{\et}(\tilde{Y}'_{/\qbar} , \tilde{C}_{\infty/\qbar} \cup \tilde{C}_{0/\qbar}, \zp(1)) &\mr{and}& 
	\breve{\mc{T}}_0 = H^1_{\et}(\tilde{Y}'_{/\qbar} , \tilde{C}_{0/\qbar} ,\zp(1)).
\end{eqnarray*}
Since $H^0_{\et}(\tilde{Y}'_{/\qbar},\tilde{C}_{0/\qbar},\zp(1)) = 0$,
the group $H^0_{\et}(\tilde{C}_{\infty/\qbar},\zp(1))$ provides the kernel in the exact sequence
\begin{equation} \label{lifted_seq}
	0 \to \La^{\iota}(1) \to \breve{\mc{T}}_{\infty,0}  \to \breve{\mc{T}}_0 \to 0
\end{equation}
of relative cohomology groups.
The pullback map $\breve{\mc{T}}_0 \to \tilde{\mc{T}}$ along the canonical section $Y_1(N) \to \tilde{Y}'$ has image the subgroup $\tilde{\mc{T}}'$ of
elements with constant residue at the infinity cusps and kernel $\bar{H}^0_{\et}(\tilde{C}_{0/\qbar},\zp(1)) \cong \bar{\La}(1)$.
We aim to modify \eqref{lifted_seq} in a similar manner to the construction of $\tilde{\mc{T}}^{\sharp}$ from $\tilde{\mc{T}}$.

The unique $I$-torsion lift $g = -t(\kappa)$ in $\tilde{\mc{T}}_{\Q}$ is $G_{\Q}$-fixed, and it
lies in $\tilde{\mc{T}}'_{\Q}$ since $\kappa$ is constant along the infinity cusps. In fact, $g$ comes from some
$g' \in H^1_{\et}(Y_1(N)_{/\Q},\qp(1))$, as the inflation map $H^2_{\et}(\Q,\qp(1)) \to H^2_{\et}(Y_1(N)_{/\Q},\qp(1))$
is split by pullback by a tangential basepoint $\vec{0}$ of $0 \in X_1(N)(\Q)$. We subtract the inflation of 
$\vec{0}^*(g') \in H^1_{\et}(\Q,\qp(1))$ from $g'$ 
so that $g'$ has trivial leading coefficient at $0$. Let $u_b \in H^1(\Q,\qp(1))$ denote the leading coefficient of $g'$ at the zero cusp 
$\langle b \rangle(0)$ for $b \in D$.
The quantity $a_{\ell} = (T_{\ell} -1 -\ell \langle \ell \rangle) g'$ is constant for each $\ell \nmid Mp$.
In particular, its leading coefficients are all equal, while the leading coefficient at $\langle b \rangle(0)$ is 
$(\ell-1)(u_b - u_{\ell^{-1}b})$. Summing this over all $b$ gives $|D|a_{\ell} = 0$, so $a_{\ell} = 0$. Thus, 
all $u_b$ are equal, hence trivial. The element $g'$ lifts uniquely to 
$H^1_{\et}(\tilde{Y}'_{/\Q} , \tilde{C}_{0/\Q} ,\qp(1))$, and we let $g_0$ denote its image in $(\breve{\mc{T}}_0)_{\Q}$.

As $(\La^{\iota}(1))^+ = 0$, there is a unique $\varsigma$-fixed lift $\tilde{g}$ of $g_0$ in $(\breve{\mc{T}}_{\infty,0})_{\Q}$. 
If $\lambda \in \La$ is such that $\lambda g_0 \in \breve{\mc{T}}_0$, then for any lift $v \in \breve{\mc{T}}_{\infty,0}$ of
$\lambda g_0$, we have $\lambda \tilde{g} = \frac{v+\varsigma v}{2} \in  \breve{\mc{T}}_{\infty,0}$ as well.
Setting $\breve{\mc{T}}_{\infty,0}^{\sharp} =  \breve{\mc{T}}_{\infty,0} + \La \tilde{g}$ and $\breve{\mc{T}}_0^{\sharp} = \breve{\mc{T}}_0 + \La g_0$, we therefore have an exact sequence
\begin{equation} \label{sharp_part_cpt_seq}
	0 \to \La^{\iota}(1) \to \breve{\mc{T}}_{\infty,0}^{\sharp}  \to \breve{\mc{T}}_0^{\sharp} \to 0
\end{equation}
of $\La$-modules.
\end{proof}

For a prime $\ell$, we denote level $N\ell$ objects with a superscript $(\ell)$: e.g., $X_1(N)^{(\ell)} = X_1(N\ell)$. 
We have degeneracy maps $p_1, p_{\ell} \colon X_1(N\ell) \to X_1(N)$ induced by multiplication by $1$ and $\ell$ on the upper half-plane, respectively. For $f \in \{p_1,p_{\ell}\}$ and $y \in C_0^{(\ell)} \cup C_{\infty}^{(\ell)}$,
we have $f^*t_{f(y)}= t_y^{e_y}$ locally at $y$ by our choices of Tate parameters, where $e_y$ is the ramification index of $y$. Consequently, $f$ defines a map $(\tilde{Y}')^{(\ell)} \to \tilde{Y}'$ with $f(\tilde{C}_0^{(\ell)}) = \tilde{C}_0$
and $f(\tilde{C}_{\infty}^{(\ell)}) = \tilde{C}_{\infty}$, and therefore $f^*$ defines morphisms $\breve{\mc{T}}_0 \to
\breve{\mc{T}}^{(\ell)}_0$ and $\breve{\mc{T}}_{\infty,0} \to \breve{\mc{T}}^{(\ell)}_{\infty,0}$.

Let $\mr{N}_{\ell} \colon \La \hookrightarrow \La^{(\ell)}$ be induced by the norm element for $\ker(D^{(\ell)} \to D)$.

\begin{step} Prove that the modified group $\breve{\mc{T}}_{\infty,0}^{\sharp}$ is $G_{\Q}$-stable. \end{step}

\begin{proof}[Sketch of proof.]
	For $\lambda \in J = (1-c^2[c] \mid (c,6Mp) = 1)$, we have $\lambda \kappa \in \mc{C}$. 
	As a consequence of Remark \ref{integral_Siegel} via the prior step, 
	the sharp construction using $t(\lambda \kappa)$ in place of $t(\kappa)$ 
	adds no new element to $\breve{\mc{T}}_{\infty,0}$. Thus, we may project 
	to the components of $\La$ on which $\La/J$ is supported, which is to say to the $\omega^{-2}$-component of the prime-to-$p$ part
	of $(\Z/Mp\Z)^{\times}$. All equalities below are implicitly taken in this localization.
	We are therefore done unless $p \mid N$ or $p = 3$.
	
	The $G_{\Q}$-stability of $\breve{\mc{T}}_{\infty,0}^{\sharp}$ is equivalent to the integrality of the coboundary 
	$w \colon G_{\Q} \to \La_{\Q}^{\iota}(1)$ of $\tilde{g}$, which we aim to prove. To show this, we employ auxiliary level
	structure, fixing a prime $\ell \nmid 6Mp$ such that $\ell \not\equiv 1 \bmod p$.
	Similarly to \cite[\S 6]{sst}, we take the level $N\ell$ Eisenstein
	series defined by 
	$$
		\mc{F}_{\ell}(q) = \ell \mc{E}(q^{\ell}) - \ell^{-1}[\ell]^{-1}\mc{E}(q)  \in \La_{\Q}\ps{q}.
	$$
	A computation gives $a_0(\mc{F}_{\ell})
	= - \frac{1}{2} \ell^{-1}[\ell]^{-1} (1-\ell^2[\ell])\xi^{\sharp}$, so $\mc{F}_{\ell}$ has $\La$-coefficients.
	
	Using $\mf{m}$ in this paragraph to denote localization at the complement of the union of the maximal ideals containing the
	Eisenstein ideal away from $\ell$, the $\mf{H}^{(\ell)}_{\mf{m}}$-module
	$\tilde{\mc{T}}^{(\ell)}_{\mf{m}}$ has a local quotient $\tilde{\mc{T}}_{\quo}^{(\ell)}$
	isomorphic to $\mf{M}_{\mf{m}}^{(\ell)}$. This canonically identifies the $[1]$-coefficient of $-\mc{F}_{\ell}$ with 
	\begin{equation} \label{modify_Siegel}
		h_{\ell} = p_1^*g - \ell^{-1}p_{\ell}^*([\ell]^{-1}g) \in ((\tilde{\mc{T}}^{(\ell)})_{\Q})^{G_{\Q}}.
	\end{equation}
	Let $\sigma$ in the inertia of $G_{\qp}$ restrict to $\varsigma$ on $\Q(\mu_{p^{\infty}})$.
	On the $\omega^{-2}$-component, $\sigma$ acts as $1$ on $\tilde{\mc{T}}_{\quo}^{(\ell)}$ and $-1$ 
	on $\ker(\tilde{\mc{T}}_{\mf{m}}^{(\ell)} \to \tilde{\mc{T}}_{\quo}^{(\ell)})$. Elements of $\tilde{\mc{T}}_{\quo}^{(\ell)}$ admit
	unique $\sigma$-fixed lifts to $\tilde{\mc{T}}_{\mf{m}}^{(\ell)}$, and the integral image of $h_{\ell}$ lifts to itself, 
	so $h_{\ell} \in \tilde{\mc{T}}^{(\ell)}$. We define $h_{\ell,0} \in (\breve{\mc{T}}_0^{(\ell)})_{\Q}$ as in \eqref{modify_Siegel} 
	but using $g_0$. It and its further lift to $\tilde{h}_{\ell} \in (\breve{\mc{T}}_{\infty,0}^{(\ell)})_{\Q}$ are the unique 
	$\varsigma$-fixed lifts of $h_{\ell}$, so integral.
	
	For $\sigma \in G_{\Q}$, one may compute that 
	$$
		\sigma \tilde{h}_{\ell} - \tilde{h}_{\ell} = \mr{N}_{\ell}(1-\ell^{-1}[\ell]^{-1})w(\sigma) \in (\La^{(\ell)})^{\iota}(1).
	$$
	As $\mr{N}_{\ell}$ maps $\La$ injectively onto a $\zp$-direct summand of $\La^{(\ell)}$, this implies 
	$(1-\ell^{-1}[\ell]^{-1})w(\sigma) \in \La^{\iota}(1)$.
	The quantity $1-\ell^{-1}[\ell]^{-1}$ is a unit in the $\omega^{-2}$-component of $\La$ in that it reduces to $1-\ell$,
	and we therefore have $w(\sigma) \in \La^{\iota}(1)$.
\end{proof}

\begin{step} Pull back by the lift of the residue to obtain the class. \end{step}

\begin{proof}[Details of construction.]
	Since \eqref{sharp_part_cpt_seq} is an exact sequence of $\La[G_{\Q}]$-modules,
	we may pull back \eqref{sharp_part_cpt_seq} via the $\La[G_{\Q}]$-module homomorphism 
	$\La \to \breve{\mc{T}}_0^{\sharp}$ sending $1$ to $-g_0$ to obtain an extension 
	\begin{equation} \label{cycl_unit_extn}
		0 \to \La^{\iota}(1) \to \mc{V} \to \La \to 0.
	\end{equation}
	The unit $z$ of interest to us is the value of $1$ under the connecting map 
	\begin{equation*} \label{conn_map_unit}
		\La = H^0_{\et}(\Z[\tfrac{1}{Mp}],\La) \to H^1_{\et}(\Z[\tfrac{1}{Mp}],\La^{\iota}(1)) \cong (\Z[\mu_N,\tfrac{1}{Mp}]^+)^{\times}
		 \otimes \zp.
	\end{equation*}
	That is, $z$ is the class of \eqref{cycl_unit_extn}.
\end{proof}

\begin{step} \label{Euler_relns} Read Euler system relations off of degeneracy maps on residues. \end{step}

\begin{proof}[Sketch of proof.]
	Fix a prime $\ell$, and set $n_{\ell} = \mr{N}_{\ell}(1) \in \La^{(\ell)}$. A calculation using the second Bernoulli distribution gives
	$n_{\ell} \kappa^{(\ell)} = \epsilon_{\ell} \kappa$,
	where $\epsilon_{\ell}$ denotes $p_{\ell}^*$ if $\ell \mid N$ and
	$p_{\ell}^* -  p_1^*\langle \ell \rangle$ if $\ell \nmid N$.
	The element $g \in \tilde{\mc{T}}'_{\Q}$ with divisor $-\kappa$ is uniquely determined as a $G_{\Q}$-fixed element, 
	so $n_{\ell} g^{(\ell)} = \epsilon_{\ell} g$. Moreover,
	$n_{\ell} g_0^{(\ell)} - \epsilon_{\ell} g_0 \in \bar{\La}_{\Q}^{(\ell)}(1)$ is $\varsigma$-fixed, hence zero. Similarly, 
	$n_{\ell} \tilde{g}^{(\ell)} = \epsilon_{\ell} \tilde{g}$ in that $n_{\ell} \tilde{g}^{(\ell)} - \epsilon_{\ell} \tilde{g}$ is $\varsigma$-fixed in 
	$(\La_{\Q}^{(\ell)})^{\iota}(1)$.

	Set $\rho_{\ell} = 1-[\ell]$ if $\ell \nmid N$ and $1$ otherwise. Since $n_{\ell} \tilde{g}^{(\ell)} = \epsilon_{\ell} \tilde{g}$, we have a map 
\begin{equation} \label{Euler_fact_map}
\begin{tikzcd}
	0 \arrow{r} & \La^{\iota}(1) \arrow{r} \arrow[d, "\mr{N}_{\ell}\rho_{\ell}"] & \mc{V} \arrow{r} \arrow[d] & \La \arrow{r} \arrow[d,"\mr{N}_{\ell}"] & 0 \\
	0 \arrow{r} & (\La^{(\ell)})^{\iota}(1) \arrow{r} & \mc{V}^{(\ell)} \arrow{r} & \La^{(\ell)}  \arrow{r} & 0
\end{tikzcd}
\end{equation}
of exact sequences
with $\mr{N}_{\ell}\rho_{\ell} $ induced by $\epsilon_{\ell} \colon \La^{\iota}(1) \to (\La^{(\ell)})^{\iota}(1)$. 
Corestriction on $p$-completions of unit groups arises from the projection map $q_{\ell} \colon (\La^{(\ell)})^{\iota}(1) \to \La^{\iota}(1)$ on coefficients, and $\mr{N}_{\ell}$ similarly induces restriction, which is injective. 

Let $\delta$ and $\delta^{(\ell)}$ denote the connecting maps on the zeroth cohomology of the sequences in \eqref{Euler_fact_map}. Since $\mr{N}_{\ell} q_{\ell}$ is multiplication by $n_{\ell} \in \La^{(\ell)}$, we see that
$$
	(\mr{N}_{\ell})_* (q_{\ell})_* \delta^{(\ell)} = \delta^{(\ell)}\mr{N}_\ell q_\ell = 
	(\mr{N}_{\ell})_* \rho_{\ell} \delta q_{\ell}.
$$ 
Since $z = \delta(1) = \delta(q_{\ell}(1))$, 
the norm $(q_{\ell})_*$ takes $z^{(\ell)} = \delta^{(\ell)}(1)$ to $\rho_{\ell} z$.
\end{proof}

The class $z_{\infty} = (z_r)_{r \ge 1}$ may be shown nontrivial in eigenspaces via a $p$-adic regulator computation. For completeness, but claiming no originality, we explain a modification of the method of the recent proof of Shang, Skinner, and Trejos Su\'arez \cite{sst} (cf. \cite{skinner,sask}) that applies to our construction. This uses semistable comparison for relative cohomology and an overconvergent primitive of the differential attached to the 
specialization of $\mc{E}$ at a character.

\begin{step} Verify character-wise nontriviality by a $p$-adic regulator computation. \end{step}

\begin{proof}[Sketch of proof.]
	Let $\theta$ be an even character of $(\Z/Mp\Z)^{\times}$.
	Let $\psi$ be a finite-order $p$-adic character of $D_{\infty}$ that agrees with $\theta$ on $(\Z/Mp\Z)^{\times}$ and
	is nontrivial on $1+p\zp$. We view $\psi$ as a primitive Dirichlet character of its conductor $f$ with $p \mid f$. We
	work until the last paragraph under the assumption that $N = f$.
	
	Let $K$ be a finite extension of $\qp(\mu_N, \psi(D))$
	over which $(X_1(N),C_1(N))$ attains semistable reduction. Let $e_{\psi} \in K[D]$ be the idempotent corresponding to $\psi$. 
	Let $V^{\psi} = e_{\psi}(\mc{V} \otimes_{\zp} K(-1))$, where $\mc{V}$ is as in \eqref{cycl_unit_extn}, 
	so that we have an exact sequence
	\begin{equation} \label{psi_seq}
		0 \to K \to V^{\psi} \to K(-1) \to 0
	\end{equation}
	of $K[G_K]$-modules. Suppose that \eqref{psi_seq} splits. 
	
	Consider the composition
	$$
		\beta \colon K(-1) \to V^{\psi} \hookrightarrow \mc{V} \otimes K(-1) \to \breve{\mc{T}}_{\infty,0} \otimes_{\zp} K(-1)
	$$
	determined by a splitting and the canonical map 
	$\mc{V} \to \breve{\mc{T}}_{\infty,0}^{\sharp}$.	
	Let $Z = \mc{O}_{Y_1(N)_{\infty}}(C_{\infty})^{\times}$, and let $k \colon Z \to Y_1(N)_{\infty}$ be the projection map.
	We pull back $\beta$ via the map $Z \to \tilde{Y}'$ given by setting the coordinates along the fibers of the zero cusps equal to the
	canonical sections $1$. Since $\psi$ is nontrivial while diamond operators act trivially on 
	the cokernel $H^0_{\et}(Y_1(N)_{\infty/\overline{K}},K(-1)) \cong K(-1)$ of $k^*$ on relative cohomology, the composition of $\beta$ with 
	the residue map to $K(-1)$ is trivial.
	Thus, the composition descends to a unique $K[G_K]$-module 
	homomorphism 
	$$
		\iota \colon K(-1) \to H^1_{\et}(Y_1(N)_{\infty/\overline{K}},C_{\infty/\overline{K}},K).
	$$
	
	Let $Q$ be the order of 
	the residue field of $K$, and let $\Phi$ be the $K$-linear Frobenius on $H^1_{\mr{dR}}(Y_1(N)_{\infty},C_{\infty}/K)$ obtained
	from the $Q$-power Frobenius on relative log-crystalline cohomology of the special fiber via semistable comparison.
	The image $v$ of a nonzero vector under the de Rham realization of $\iota$ lies in
	$\mr{Fil}^1(H^1_{\mr{dR}}(Y_1(N)_{\infty},C_{\infty}/K))$ and satisfies $\Phi(v) = Qv$
	(cf.~\cite[\S 2.2, \S 5.1]{sst}).
	
	Let $\tilde{\psi} \colon \La_{\Q}\ps{q} \to K\ps{q}$ be induced by $\psi$ on coefficients, and let $\tau(\psi)$ be the Gauss sum of $\psi$.
	The differential
	$$
		\omega_{\psi} = \frac{1}{\tau(\psi)}\tau_N^*\left(\tilde{\psi}(\mc{E})\frac{dq}{q}\right) = \sum_{n=1}^{\infty}\left( \sum_{d \mid n} \psi^{-1}
		\left(\frac{n}{d}\right)d \right) q^n \frac{dq}{q}
	$$
	has residue a $K^{\times}$-multiple of $e_{\psi} \kappa$. By its construction from $t(\kappa)$, 
	the image $v_{\mr{dR}}$ of $v$ in $H^1_{\mr{dR}}(Y_1(N)_{\infty}/K)$ has the same residues as $u \omega_{\psi}$ for some 
	$u \in K^{\times}$.
	Both $v_{\mr{dR}}$ and the class $[\omega_{\psi}]$ have $T_{\ell}$-eigenvalue $\ell+\psi^{-1}(\ell)$ at each prime $\ell \nmid N$
	for the usual Hecke action on differentials. 
	Ramanujan-Petersson therefore forces $v_{\mr{dR}} - u[\omega_{\psi}] \in H^1_{\mr{dR}}(X_1(N)/K)$ to be zero. As 
	$\mr{Fil}^1 H^0_{\mr{dR}}(C_{\infty}/K) = 0$, the class $v$ is represented by $(u\omega_{\psi},0)$.

	The following paragraph is adapted from the arguments of \cite[\S 5.2]{sst}.
	On the connected ordinary region $U$ of the rigid analytic curve $X_1(N)^{\mr{an}}_{/K}$ containing $C_{\infty}$,
	there is an overconvergent weight zero form $H_{\psi}$ with $dH_{\psi} = \omega_{\psi}$ having constant term
	$\frac{1}{2}\psi(a)L_p(\psi^{-1},1)$ at $\langle a \rangle^{-1}\infty$ for each $a \in D$.
	We restrict  $H^1_{\mr{dR}}(Y_1(N)_{\infty},C_{\infty}/K)$ to the first overconvergent cohomology group $\mc{H}$ for 
	$U - C'_{\infty}$ relative to $C_{\infty}$. Consider the exact sequence
	$$
		0 \to K \to H^0(C_{\infty},K) \xrightarrow{\partial^{\dagger}} \mc{H}.
	$$
	The image $v^{\dagger}$ of $v$ in $\mc{H}$ is $-u\partial^{\dagger}(H_{\psi}|_{C_{\infty}})$.
	As the cusps are $K$-rational, $\Phi(v^{\dagger}) = v^{\dagger}$. Since we also have 
	$\Phi(v^{\dagger}) = Qv^{\dagger}$ by comparison, we have $v^{\dagger} = 0$. Therefore $H_{\psi}|_{C_{\infty}}$ is constant,
	yet 
	$$
		H_{\psi}(\infty) - H_{\psi}(\langle a \rangle^{-1} \infty) = \frac{1-\psi(a)}{2}L_p(\psi^{-1},1)
	$$
	for any $a \in (\Z/N\Z)^{\times}$. 
	Choosing $a$ so that $\psi(a) \neq 1$, this forces $L_p(\psi^{-1},1) = 0$.
	
	No longer assuming $f = N$, choose $\psi$ of sufficiently large conductor 
	so that $L_p(\psi^{-1},1) \neq 0$ and $\psi(\ell) \neq 1$ for all $\ell \mid M$ with $\ell \nmid f$. 
	Write $z'_{\infty,\theta}$ for the analogue of $z_{\infty,\theta}$ with $M$ replaced by the prime-to-$p$ part of $f$.
	By Step \ref{Euler_relns}, the $\psi$-specialization of $\res_p z_{\infty,\theta}$ is the product of nonzero 
	Euler factors $\prod_{\ell \mid M, \ell \nmid f} (1-\psi(\ell))$ times the $\psi$-specialization of $\res_p z'_{\infty,\theta}$.
	The exact sequence \eqref{psi_seq} at level $f$ with class the $\psi$-specialization of $\res_p z'_{\infty,\theta}$
	does not split by the nonvanishing of $L_p(\psi^{-1},1)$, allowing us to conclude that 
	$z_{\infty,\theta} \neq 0$.
\end{proof}

Finally, we describe a spectral sequence argument from \cite{shwa} that shows the class $z$ of $\mc{V}$ is the real projection of $1-\zeta_N$ by employing the identification $g = g_{0,\frac{1}{N}}$ that we avoided above.

\begin{proof}[Sketch of argument.]
	We have an anticommutative square
	\begin{equation} \label{spec_seq_sq}
	\begin{tikzcd}
		H^1_{\et}(\tilde{Y}'_{/\Z[\frac{1}{Mp}]} ,\tilde{C}_{0/\Z[\frac{1}{Mp}]},\qp(1)) \arrow{r} \arrow{d} & 
		H^1_{\et}(\tilde{C}_{\infty/\Z[\frac{1}{Mp}]},\qp(1)) \arrow{d} \\
		H^1_{\et}(\tilde{Y}'_{/\qbar},\tilde{C}_{0/\qbar},\qp(1))^{G_{\Q}} \arrow[r,"\partial"] &
	 	H^1_{\et}(\Z[\frac{1}{Mp}],H^0_{\et}(\tilde{C}_{\infty/\qbar},\qp(1))).
	\end{tikzcd}
	\end{equation}
	Its vertical maps arise from Hochschild-Serre, the upper horizontal map is restriction to $\tilde{C}_{\infty}$, and $\partial$ is a 
	connecting map of \eqref{lifted_seq}. The $N$-unit $z$ is the value of $1$ under the connecting map induced by 
	\eqref{cycl_unit_extn}. That is, since $\mc{V}$ was constructed by pullback using $-g_0$, we have $z = -\partial(g_0)$.
	By the anticommutativity of \eqref{spec_seq_sq}, this is the leading coefficient  
	of $g'$ at $\infty$, i.e., the real projection $(1-\zeta_N)(1-\zeta_N^{-1}) \otimes \frac{1}{2}$ of $1-\zeta_N$.
\end{proof}

\subsection*{An application to our conjecture}

Theorem \ref{fkident} is the first part of a two-part theorem of Fukaya and Kato on Conjecture \ref{key_conj}. The second part \cite[Theorem 7.2.3(2)]{fk-sh} should extend to our more general setting. We state the result here under the original hypotheses \eqref{hypotheses}
and briefly outline the proof from \cite[\S 10.3]{fk-sh}.

\begin{theorem} 
	Let $\{0 \to \infty\}_{\theta} \in \tilde{\mc{T}}_{\mf{m},\theta}$ denote the image of the class of the path from $0$ to $\infty$, and consider
	the reduction $m_{\theta} \in P_{\theta}$ of $(U_p-1)\{0 \to \infty\}_{\theta} \in \mc{T}_{\mf{m},\theta}$. Then
	\begin{eqnarray*}
		\Upsilon_{\theta}(\{1-\zeta_N,p\}_{\theta}) =m_{\theta} &\mr{and}& \varpi_{\theta}(m_{\theta}) 
		= \{1-\zeta_N,p\}_{\theta},
	\end{eqnarray*}
	where the subscript $\theta$ on symbols denotes projection to the $\theta$-eigenspace.
\end{theorem}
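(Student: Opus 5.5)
The second identity is a direct computation with Manin symbols and the explicit formula for $\Pi^{\circ}$. The first will come from a cup-product compatibility of the connecting map defining $\Upsilon$, with the Kummer class of $p$ producing the operator $U_p-1$ through the local Frobenius at $p$.

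\emph{The identity $\varpi_{\theta}(m_{\theta}) = \{1-\zeta_N,p\}_{\theta}$.}
\begin{enumerate}
\item Since $p \mid N$, write $U_p$ on paths as $U_p\{0 \to \infty\} = \sum_{j=0}^{p-1} \smatrix{1&j\\0&p}\{0\to\infty\} = \sum_{j=0}^{p-1}\{\tfrac{j}{p}\to\infty\}$.
\item Conclude that $(U_p-1)\{0\to\infty\} = \sum_{j=1}^{p-1}\{\tfrac{j}{p} \to \infty\} - \{\tfrac{p-1}{p}\cdot 0 \to \infty\}$ up to a path between zero-type cusps.
\item After projecting to the $\theta$-eigenspace, the boundary terms at zero cusps cancel. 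This is where $\theta \neq 1$ and $M \mid \cond(\theta)$ enter. The element is then a genuine class in $\mc{S}_{\theta}$, expressible through the Manin symbols $[u:v]$ after applying $\tau_N$.
\item Each path $\{\tfrac{j}{p}\to\infty\}$ comes from a matrix with bottom row $(p,\ast)$. So its image under $\Pi^{\circ}$ is a symbol $\{1-\zeta_N^{a},1-\zeta_N^{b_j}\}$ with $a$ fixed and $\zeta_N^{b_j}$ running over $\zeta_N^{b}\zeta_p^{j}$.
\item Sum over $j$ and use the distribution relation $\prod_{j}(1-x\zeta_p^j) = 1-x^p$. In the $\theta$-eigenspace, the product over $j=1,\dots,p-1$ of the $(1-\zeta_p^j)$ contributes $p$, and the remaining terms telescope or die by Steinberg relations.
\item This yields $\{1-\zeta_N,p\}_{\theta}$, taking care with the sign conventions for $\tau_N$ and the $\La$-action.
\end{enumerate}

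\emph{The identity $\Upsilon_{\theta}(\{1-\zeta_N,p\}_{\theta}) = m_{\theta}$.}
\begin{enumerate}
\item View $\{1-\zeta_N,p\}$ as the cup product of the class $z$ of $1-\zeta_N$ in $H^1_{\et}(\Z[\tfrac1{Mp}],\La^{\iota}(1))$ with the Kummer class of $p$. Use the identification of $z$ with the class of \eqref{cycl_unit_extn}, or equivalently with the Kummer class of the leading coefficient of $g$ at $\infty$.
\item Since $\Upsilon$ is a connecting map for the Selmer-complex triangle built from \eqref{pullkey}, compatibility of connecting maps with cup products reduces $\Upsilon(z\cup p)$ to a local computation at $p$. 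That computation is the Poitou--Tate pairing of the Kummer class of $p$ with the image under the connecting map of a global lift of $z$ into $\tilde{T}$, composed with the retraction $\tilde{t}$.
\item Take the lift to be the relative class $\{0\to\infty\}$. Its boundary is the difference of a zero cusp and an infinity cusp, and it maps under $\tilde{\phi}$ to the generator, matching the residue description $g = -t(\kappa)$ and $\pi_0(\kappa)=\tfrac12\xi^{\sharp}$.
\item The Kummer class of $p$, paired via the invariant map against an unramified class, evaluates that class at Frobenius. On $T_{\quo}$, the Frobenius $\Fr_p$ acts as $U_p$.
\item Hence the local contribution is $(\Fr_p-1)$ applied to the image of $\{0\to\infty\}$ in $T_{\quo}\cong K$. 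This is the reduction of $(U_p-1)\{0\to\infty\}$, namely $m_{\theta}$, and it lands in $P_{\theta}$ because $K_{\theta}=P_{\theta}$ under \eqref{hypotheses}.
\end{enumerate}

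\emph{Main obstacle.} The hardest step is the middle one: showing rigorously that $\{0\to\infty\}_{\theta}$ is the correct global lift of $z$ inside the modified module $\tilde{T}$. This means checking that it lies in $\tilde{\mc{T}}^{\sharp}$ and has the right image under $\phi$, and then tracking signs through the twisted Poincaré pairing and the Poitou--Tate invariant map. I would first verify this on $I$-torsion using the Manin--Drinfeld splitting $t$, then use the Eisenstein property to pass to the reduction.
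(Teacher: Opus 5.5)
\textbf{Same route as the paper for the first identity, with one genuine gap.} Your argument for $\Upsilon_{\theta}(\{1-\zeta_N,p\}_{\theta}) = m_{\theta}$ follows the paper's steps:
\begin{itemize}
\item realize $\{1-\zeta_N,p\}_\theta$ as the first connecting map of the Tate twist of \eqref{cycl_unit_extn}, evaluated on the Kummer class of $p$;
\item compute that connecting map through the relative extension \eqref{global_extn}, in which $\{0\to\infty\}_\theta$ lifts $1\in(\mf{H}/I)_\theta$;
\item evaluate the local term at $p$ as the invariant of $p$ cupped with the unramified cocycle sending $\Fr_p$ to $m_\theta$. This cocycle has that value because $\Fr_p$ acts as $U_p$ on the unramified quotient.
\end{itemize}
You are right that your ``main obstacle'' is the crux. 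It should be phrased as the statement that the pushout of \eqref{global_extn} by $\phi_\theta$ agrees with the reduction of \eqref{cycl_unit_extn} modulo $\mc{L}_\theta$. The class $\{0\to\infty\}$ is not an element of $\tilde{T}$, so this is a comparison of extensions rather than a lift of $z$ into $\tilde T$.

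The gap is your claim that cup-product compatibility reduces everything to a computation at $p$ alone. The extension \eqref{global_extn}, the class $z$, and the cup product all live over $\Z[\frac{1}{Mp}]$. Once the global coboundary is absorbed by the lift $\{0\to\infty\}$, what survives in compactly supported cohomology is a sum of local terms at \emph{every} prime dividing $Mp$. You must show the terms at $\ell\mid M$ vanish. This is exactly where $M\mid\cond(\theta)$ and $p\nmid\varphi(M)$ enter, and the latter is used nowhere in your sketch. The paper's argument runs as follows:
\begin{itemize}
\item Primitivity of $\theta$ at $\ell$ gives a $G_{\Q_\ell}$-splitting of \eqref{keyseq}, so the term at $\ell$ is the analogue of the composition at $p$.
\item Since $\ell\not\equiv 1\bmod p$, the $G_{\Q_\ell}$-extension of $(\mf{H}/I)_\theta$ by $P_\theta$ is unramified.
\item Since $p\in\Z_\ell^\times$, the cup product with $p$ lands in $H^2(\F_\ell,P_\theta(1))=0$.
\end{itemize}

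\textbf{The second identity: right idea, wrong details.} The paper simply cites \cite[\S 5]{sh-Lfn} here. Your steps 2--5 contain errors:
\begin{itemize}
\item From your step 1 you get exactly $(U_p-1)\{0\to\infty\}=\sum_{j=1}^{p-1}\{\frac{j}{p}\to\infty\}$. There is no extra $-\{0\to\infty\}$ term.
\item The boundary is $(p-1)[\infty]-\sum_j[\frac{j}{p}]$, which lies on nonzero cusps, not zero cusps. This is consistent with the cycle lying in $\mc{S}^\circ$.
\item No matrix with bottom row $(p,\ast)$ is involved. In fact $\{\frac{j}{p}\to\infty\}=\tau_N\{-\frac{p}{jN}\to 0\}$, which is the Manin symbol $[1:\frac{jN}{p}]$ attached to $\smatrix{0&-1\\1&jN/p}$.
\item Hence $\Pi^\circ$ sends each term to $\{1-\zeta_N,1-\zeta_p^j\}$. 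The sum is $\{1-\zeta_N,\prod_{j=1}^{p-1}(1-\zeta_p^j)\}=\{1-\zeta_N,p\}$ directly, with no distribution relation or telescoping needed.
\end{itemize}
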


\begin{proof}[Outline of proof.]
	For the second identity, see the calculation in \cite[\S 5]{sh-Lfn}. On the left of the first identity, we have 
	$\Upsilon_{\theta}$ applied to the value $\partial^1(p) = \{1-\zeta_N,p\}_{\theta}$ of the first connecting map $\partial^1$
	for the Tate twist of \eqref{cycl_unit_extn}.	
	On the right, we employ the global extension
	\begin{equation} \label{global_extn}
		0 \to T_{\theta} \to \tilde{\mc{T}}_{\mf{m},\theta}/I\mc{T}_{\theta} \to (\mf{H}/I)_{\theta} \to 0,
	\end{equation}
	with the image of $\{0 \to \infty\}_{\theta}$ mapping to $1 \in (\mf{H}/I)_{\theta}$. The
	pushout of this extension by $\phi_{\theta}$ agrees with the pushout by reduction modulo $\mc{L}_{\theta}$
	of the $\theta$-eigenspace of \eqref{cycl_unit_extn}, so we may compute $\partial^1(p)$
	using \eqref{global_extn}. The value $\Upsilon_{\theta} \circ \partial^1(p)$ 
	reduces to a local contribution at the primes dividing $Mp$. 
	
	The local contribution at $p$ is the value of $1$ under the composition
	\begin{equation} \label{local_contrib_p}
		H^0_{\et}(\Q,(\mf{H}/I)_{\theta}) \to H^1_{\et}(\Q,T_{\theta}) \to H^1_{\et}(\qp,P_{\theta}) 
		\xrightarrow{\cup p} H^2_{\et}(\qp,P_{\theta}(1)) \xrightarrow{\mr{inv}} P_{\theta}
	\end{equation}
	of the connecting map, the local splitting, cup product with $p$, and the invariant map.
	This is $\mr{inv}(\partial_p^0(1) \cup p)$ for the zeroth connecting map $\partial_p^0$ of the exact sequence of unramified
	local quotients of \eqref{global_extn}. The cocycle $\partial_p^0(1)$ is 
	determined by its value on $\Fr_p$. Since $\Fr_p$ acts as $U_p$ on $\tilde{\mc{T}}_{\quo}$, 
	this value is $m_{\theta}$.
	Rephrasing this, we have
	$\partial_p^0(1) = \nu  m_{\theta}$ for the unramified character
    	$\nu \colon G_{\qp} \to \zp$ with $\nu(\Fr_p) = 1$. Since
	$\mr{inv}(\nu \cup p) = 1$, the value of the composition is $m_{\theta}$. 
	
	Finally, for primes $\ell \mid M$, the primitivity of $\theta$
	gives a $G_{\Q_{\ell}}$-splitting of \eqref{keyseq}. Thus, we can write down the analogous composition to \eqref{local_contrib_p}
	using $\Q_{\ell}$-cohomology. As $\ell \not\equiv 1 \bmod p$, the $\mf{H}_{\theta}[G_{\Q_{\ell}}]$-extension of $(\mf{H}/I)_{\theta}$
	by $P_{\theta}$ is unramified at $\ell$.
	Since $p \in \Z_{\ell}^{\times}$, the composition then takes values in the zero unramified subgroup 
	$H^2(\F_{\ell},P_{\theta}(1))$ of $H^2(\Q_{\ell},P_{\theta}(1))$.
\end{proof}

\begin{remark}
We can unwind the first connecting map over $\Z[\tfrac{1}{Mp},\mu_N]^+$ of the Tate twist of \eqref{cycl_unit_extn} to a map
$$
	\Psi \colon H^1_{\et}(\Z[\tfrac{1}{Mp}],\La^{\iota}(1)) \otimes_{\zp} \La \to Y^{\circ} \otimes_{\zp} \La, \quad 
	\Psi( u \otimes 1) = \sum_{d \in D} \{ 1-\zeta_N^d, u \} [d].
$$
The inverse limit map $\Psi_{\infty}$ is valued in $Y_{\infty} \otimes_{\zp} \La_{\infty}$ and equals the composition of a reciprocity map \cite[\S 2]{sh-Lfn} with a map determined by the Galois action on $p$-power roots of cyclotomic $Mp$-units.
In $\theta$-eigenspaces under \eqref{hypotheses}, the composition $(\Upsilon_{\infty} \otimes 1) \circ \Psi_{\infty}$ recovers part of
a map which is conjectured in \cite[Conjecture 6.3]{sh-Lfn} to send limits of cyclotomic units to Eisenstein reductions of modified versions of the Mazur-Kitagawa $p$-adic $L$-function.\footnote{The full map can be recovered as a formal sum of such maps over divisors of $Mp$.} This conjecture is equivalent to the $\theta$-component of the inverse limit form of Conjecture \ref{key_conj}.
\end{remark}

\begin{ack}
	The author thanks Gyujin Oh and Preston Wake for our collaborations and the unfinished work described herein, Kazuya
	Kato, Takako Fukaya, and Akshay Venkatesh for their ideas in our earlier collaborations,
	Peter Xu for many fruitful conversations, Marco Sangiovanni Vincentelli and Chris Skinner for explaining their work on Euler systems,
	Sheng-Chi Shih and Jun Wang for explaining their work on $\Upsilon$, and Frederick Vu for his thesis work.
	
	The author employed versions of ChatGPT and Codex to test ideas, properly formulate and prove certain desired results, generate a
	 BibTeX file, proofread this article, and suggest wording. This work was supported in part by a Simons Foundation Travel Support for
	  Mathematicians award.
\end{ack}

\end{document}